\documentclass[preprint,12pt]{elsarticle}

\usepackage{amssymb}
\usepackage{amsmath}
\usepackage{amsthm}
\usepackage{geometry}
\usepackage{graphicx}
\usepackage{hyperref}
\usepackage{algorithm}
\usepackage{algorithmic}
\allowdisplaybreaks

\newtheorem{theorem}{Theorem}
\newtheorem{lemma}[theorem]{Lemma}
\newtheorem{remark}[theorem]{Remark}
\newtheorem{definition}[theorem]{Definition}

\newtheorem{proposition}[theorem]{Proposition}

\makeatletter
\newenvironment{breakablealgorithm}
{
    \begin{center}
        \refstepcounter{algorithm}
        \hrule height.8pt depth0pt \kern2pt
        \renewcommand{\caption}[2][\relax]{
            {\raggedright\textbf{\ALG@name~\thealgorithm} ##2\par}%
            \ifx\relax##1\relax 
            \addcontentsline{loa}{algorithm}{\protect\numberline{\thealgorithm}##2}%
            \else 
            \addcontentsline{loa}{algorithm}{\protect\numberline{\thealgorithm}##1}%
            \fi
            \kern2pt\hrule\kern2pt
        }
    }{
        \kern2pt\hrule\relax
    \end{center}
}
\makeatother

\journal{Journal of Differential Equations}

\begin{document}

\begin{frontmatter}



\title{An Efficient Symbolic Algorithm for Computing Lyapunov Constants in Planar Switching Systems}


\author[inst1]{Cheng Zheng}

\author[inst1]{Laigang Guo}

\author[inst2]{Xiangyu Wang}


\affiliation[inst1]{organization={School of Mathematical Sciences, Beijing Normal University},
            city={Beijing},
            postcode={100875},
            country={China}}

\affiliation[inst2]{organization={College of Mathematics and Physics, Beijing University of Chemical Technology},
            city={Beijing},
            postcode={100029},
            country={China}}

\nonumnote{%
\rightskip=0pt plus 1fil\relax
Corresponding authors: Xiangyu Wang (xwan387@buct.edu.cn),
Laigang Guo (lgguo@bnu.edu.cn).%
}


\begin{abstract}
Lyapunov constants are a crucial tool for analyzing bifurcation behavior in switching systems. This paper proposes an efficient algebraic symbolic algorithm for computing Lyapunov constants in planar switching systems. By mapping the vector fields into the complex domain, we construct a normal form algorithm based on Laurent polynomials. The classical Poincaré map method requires repeated symbolic integration, which may lead to substantial computational difficulties at high orders. Our approach improves computational efficiency by replacing continuous integration with direct algebraic operations. We apply this approach to investigate two switching models. For a generalized switching quartic Liénard system, the algebraic method extracts the center conditions and proves the existence of 10 small-amplitude limit cycles, establishing a new lower bound for its cyclicity. Furthermore, a physically motivated circuit-level modification of the Alpazur oscillator is investigated. We show that at most one small-amplitude limit cycle can bifurcate from the center and that this upper bound is attainable, illustrating the applicability of the proposed algorithm to a practical switching circuit. This application shows that our method provides a systematic framework for analyzing and interpreting complex dynamical phenomena arising in practical piecewise smooth systems.
\end{abstract}

\begin{keyword}


Symbolic algorithm, Planar switching system, Lyapunov constant, Limit cycle, Normal form
\MSC[2020] 34C07 \sep 34C20 \sep 68W30
\end{keyword}

\end{frontmatter}



\section{Introduction}
Piecewise smooth differential systems provide a natural framework for modeling abrupt changes in system dynamics \cite{filippov1988differential}. This formulation applies to physical phenomena from dry friction in mechanical oscillators to power regulation in ele 
 ctronic circuits \cite{kousaka1999bifurcation, bernardo2008piecewise, colombo2009two}. Periodic oscillations in such models motivate the study of limit cycles and their bifurcations. Planar switching systems may exhibit higher Hopf cyclicity than smooth systems of comparable polynomial degree. The corresponding center-focus problem is therefore an important step in determining local cyclicity. \cite{guckenheimer2013}. The local emergence of these small-amplitude limit cycles is governed by the Hopf bifurcation \cite{guckenheimer2013, marsden2012hopf}, and determining their precise number relies on computing the Lyapunov constants of the system \cite{perko2013differential, han2012normal}. 

Various methods have been developed to compute the Lyapunov constants for planar switching systems. Freire et al. and Llibre et al. \cite{freire1998bifurcation, llibre2004existence, buzzi2013piecewise} formulated generalized Poincaré maps by extending classical return-map techniques. Gasull and Torregrosa \cite{gasull2003center} developed a displacement-function approach based on the matching of half-return maps. For systems with an unperturbed center or a Hamiltonian structure, Han and Liang \cite{han2012normal, liang2016number} expanded higher-order Melnikov functions. This approach determines cyclicity by integrating piecewise perturbation terms along unperturbed closed orbits. Related study \cite{romanovski2009center} extended formal series expansions and singular-point quantities to nonsmooth systems. Tian and Yu \cite{tian2015center} applied algebraic expansions to resolve center conditions in a quadratic switching Bautin system and proved the existence of 10 limit cycles. Extending Lyapunov computations to cubic switching systems, Guo et al. \cite{guo2019bifurcation} investigated $Z_2$-equivariant cubic switching systems and constructed a specific example to prove the existence of 18 small-amplitude limit cycles bifurcating from two symmetric centers, and Yu et al. \cite{yu2021eighteen} obtained 18 limit cycles by perturbing two symmetric fine foci. 

Liénard systems form an important class in the study of limit cycles in both smooth and switching differential systems. Formulated as $\dot{x} = y - F(x), \dot{y} = -g(x)$, the generalized model reduces to the classical Liénard system when $g(x)=x$. Smale \cite{smale1998mathematical} posed the determination of the maximum number of limit cycles in this classical configuration as his 13th mathematical problem. Lins et al. \cite{lins1977lienard} conjectured that this cyclicity is bounded by $\lfloor (n-1)/2 \rfloor$, where $n$ is the polynomial degree of $F(x)$. Subsequent investigations \cite{tian2011hopf, christopher1999small} on generalized smooth Liénard equations established specific Hopf cyclicity bounds contingent upon the degree and symmetry of $g(x)$. Recent studies \cite{llibre2015limit, chen2024nilpotent} have extended this setting to piecewise smooth Liénard systems by allowing $F$ and $g$ to be piecewise polynomial. In particular,  through perturbation analysis, Chen and Peng \cite{chen2026global} characterized generalized piecewise cubic Liénard systems, isolated explicit global center conditions, and proved the existence of nine limit cycles. 

In this paper, we propose an algebraic symbolic algorithm for planar switching systems. The main contributions are as follows:
\begin{enumerate}
    \item We formulate a normal-form procedure in an extended Laurent algebra obtained from the substitution $z=e^{\mathrm{i}\theta}$. The resulting operators compute half-period means and solve homological equations by finite algebraic operations.
    \item We apply the algorithm to a generalized quartic switching Liénard system, derive several center conditions and obtain a parameter configuration associated with 10 small-amplitude limit cycles, establishing a new lower bound for such systems.
    \item We apply the method to a physically motivated modification of the Alpazur oscillator, derive its center conditions, and establish a sharp upper bound of one for the number of bifurcating small-amplitude limit cycles.
\end{enumerate}

The remainder of the paper is organized as follows. Section \ref{Preliminaries} reviews the necessary definitions and results for planar switching systems. Section \ref{Main Results} develops the extended Laurent-algebra framework and presents the recursive algorithm. Section \ref{Examples} applies the algorithm to two examples. Section \ref{Conclusion} concludes the paper.

\section{Preliminaries}\label{Preliminaries}

In this section, we review the preliminaries for planar switching systems with Hopf bifurcations. 

Consider a planar switching system separated by a boundary $\Sigma = \{\mathbf{X} \in \mathbb{R}^2 \mid H(\mathbf{X}) = 0\}$, where $\mathbf{X}=(x,y)^T$, $H: \mathbb{R}^2 \to \mathbb{R}$ is a scalar switching function that divides the state space into two regions, $\Sigma^+$ and $\Sigma^-$. To ensure the boundary $\Sigma$ is a well-defined one-dimensional smooth manifold passing through the equilibrium of interest, we assume that $H(\mathbf{X})$ is sufficiently smooth, $H(\mathbf{0}) = 0$, and the non-degeneracy condition $\nabla H(\mathbf{X}) \neq \mathbf{0}$ holds for all $\mathbf{X} \in \Sigma$. The system is described by 
\begin{equation}\label{sys:2.41}
    \dot{\mathbf{X}} = 
    \begin{cases} 
        A_1 \mathbf{X} + \mathbf{F}_1(\mathbf{X}), & H(\mathbf{X}) \ge 0, \\ 
        A_2 \mathbf{X} + \mathbf{F}_2(\mathbf{X}), & H(\mathbf{X}) < 0,
    \end{cases}
\end{equation}
where $A_k$ ($k=1, 2$) are $2 \times 2$ matrices with a pair of purely imaginary eigenvalues $\pm i \omega_k$ ($k=1, 2$), and $\mathbf{F}_1(\mathbf{X})$ and $\mathbf{F}_2(\mathbf{X})$ are the higher-order nonlinear terms with $\mathbf{F}_i(\mathbf{0})=0$, $i=1,2$.

\begin{lemma}\label{rotation transformation}
    Let the switching boundary be the straight line
    \begin{equation}\label{straight line}
        \Sigma = \{ \mathbf{X} \in \mathbb{R}^2 \mid a x + b y = 0 \},
    \end{equation}
    where $a^2 + b^2 \neq 0$. There exists a nonsingular linear transformation $\tilde{\mathbf{X}} = T\mathbf{X}$ that maps the boundary $\Sigma$ to the horizontal axis, i.e., $\tilde{y} = 0$. 
\end{lemma}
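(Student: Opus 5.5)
The plan is to write down an explicit matrix $T$ whose second row is the normal covector $(a,b)$ of the line, so that the new coordinate $\tilde y$ is exactly the quantity $ax+by$. The first row will be chosen to make $T$ invertible. The natural choice is the orthogonal direction,
\begin{equation*}
T=\begin{pmatrix} b & -a\\ a & b\end{pmatrix},\qquad \tilde x = bx-ay,\quad \tilde y = ax+by .
\end{equation*}
Then $\det T = a^2+b^2\neq 0$ by the hypothesis in the statement, so $T$ is nonsingular. If we want a rotation, we can divide by $\sqrt{a^2+b^2}$; this gives an orthogonal matrix of determinant one.

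Next we verify that $T$ maps $\Sigma$ onto the horizontal axis. If $\mathbf{X}\in\Sigma$, then $\tilde y = ax+by=0$, so $T\Sigma\subseteq\{\tilde y=0\}$. Conversely, $T$ is a linear bijection of $\mathbb{R}^2$, and both $\Sigma$ and $\{\tilde y=0\}$ are one-dimensional subspaces. Hence the inclusion is an equality. One can also see this directly: $\tilde{\mathbf X}=(\tilde x,0)^T$ has preimage $T^{-1}\tilde{\mathbf X}$, which satisfies $ax+by=0$ because that is precisely its second coordinate.

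Finally, we record how system (\ref{sys:2.41}) transforms, since this is presumably how the lemma will be used. In the new coordinates the system reads $\dot{\tilde{\mathbf X}} = TA_kT^{-1}\tilde{\mathbf X}+T\mathbf F_k(T^{-1}\tilde{\mathbf X})$. Each $TA_kT^{-1}$ is similar to $A_k$, so it keeps the eigenvalues $\pm i\omega_k$. The nonlinear terms stay of higher order and still vanish at the origin. The regions $H\ge0$ and $H<0$ become $\tilde y\ge 0$ and $\tilde y<0$ when $H=ax+by$; if $H$ has the opposite sign convention, we compose $T$ with the reflection $\tilde y\mapsto-\tilde y$.

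There is no real obstacle here. The only points needing care are the nondegeneracy of $T$, which is guaranteed by $a^2+b^2\neq0$, and the sign bookkeeping that ensures $\Sigma^+$ is sent to the upper half-plane.
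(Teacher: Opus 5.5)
Your proposal is correct and uses exactly the paper's construction: $T$ with rows $(b,-a)$ and $(a,b)$, nonsingularity from $\det T=a^2+b^2\neq0$, and the observation that $\tilde y=ax+by$ vanishes on $\Sigma$. Your dimension argument additionally shows that $T\Sigma$ is the whole line $\{\tilde y=0\}$, not merely contained in it, which completes the paper's one-directional check.
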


\begin{proof}
    Let
    \begin{equation*}
        T = \begin{pmatrix} b & -a \\ a & b \end{pmatrix}.
    \end{equation*}
    Since $\det(T) = a^2 + b^2 \neq 0$, $T$ is nonsingular and thus invertible. 
    Let $\tilde{\mathbf{X}} = (\tilde{x}, \tilde{y})^T = T\mathbf{X}$. Then we obtain
    \begin{equation*}
        \tilde{y} = a x + b y.
    \end{equation*}
    For every $\mathbf{X} \in \Sigma$, \eqref{straight line} gives $\tilde{y} = 0$, which completes the proof.
\end{proof}

\begin{lemma} \label{lem:canonical_form}
Consider the planar switching system \eqref{sys:2.41} with a straight-line boundary $\Sigma$. There exists a coordinate and time transformation that maps the system into the form 
\begin{align}\label{sys:2.5}
    \dot{{\mathbf{X}}} &= 
    \begin{cases} 
        J\mathbf{X} + \mathbf{F}^{(1)}(x,y), & \text{if } y \ge 0 , \\ 
        J\mathbf{X} + \mathbf{F}^{(2)}(x,y), & \text{if } y < 0 , \\
    \end{cases}
\end{align}
where $\mathbf{X}=(x,y)^T$, $J = \begin{pmatrix} 0 & -1 \\ 1 & 0 \end{pmatrix}$, and the functions $\mathbf{F}^{(i)}=(\sum\limits_{k=2}^n X_k^{(i)}, \sum\limits_{k=2}^n Y_k^{(i)})^T$, $i=1,2$ are analytic, starting from quadratic terms.
\end{lemma}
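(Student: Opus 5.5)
The plan is to combine Lemma \ref{rotation transformation} with a separate linear normalization in each half-plane. The freedom that makes this work is that in a switching system we may use a different linear change of variables and a different time rescaling in each region. This is allowed provided every map fixes the switching line pointwise and preserves each half-plane. The first condition makes orbits glue correctly across $\Sigma$; the second keeps the regions from being exchanged. Time reparametrizations by positive constants do not change orbits or the return map, so they are harmless.

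\textbf{Step 1.} Apply the matrix $T$ of Lemma \ref{rotation transformation} to both vector fields. In the new coordinates the boundary is $y=0$, and the two regions become $y\ge 0$ and $y<0$; replacing $T$ by $-T$ if necessary ensures $H\ge 0$ corresponds to $y\ge 0$. The transformed linear parts $A_k$ still have eigenvalues $\pm i\omega_k$, because they are conjugates of the original $A_k$. The nonlinear parts remain analytic and begin with quadratic terms. Writing $A_k=(a^{(k)}_{ij})$, we have $\operatorname{tr}A_k=0$ and $\det A_k=\omega_k^2>0$. Consequently $a^{(k)}_{21}\neq 0$, since otherwise the eigenvalues $a^{(k)}_{11},a^{(k)}_{22}$ would be real.

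\textbf{Step 2.} For each $k$, set $v_1=e_1$ and $v_2=A_ke_1/\omega_k=(a^{(k)}_{11},a^{(k)}_{21})^T/\omega_k$, and let $P_k=(v_1\ v_2)$. By Cayley--Hamilton, $A_k^2=-\omega_k^2 I$, so $A_kv_2=-\omega_k v_1$ and $A_kv_1=\omega_kv_2$. Hence $P_k^{-1}A_kP_k=\omega_kJ$.

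The map $\mathbf X\mapsto P_k^{-1}\mathbf X$ fixes the $x$-axis pointwise, because $P_ke_1=e_1$. It sends $y$ to $(\omega_k/a^{(k)}_{21})\,y$, so it preserves the half-planes exactly when $a^{(k)}_{21}>0$.

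\textbf{Step 3.} Rescale time in region $k$ by $\tau=\omega_k t$. This turns the linear part into $J$ and keeps the nonlinear terms analytic and of order at least two. Both operations, the coordinate change and the time rescaling, act only on one region and do not affect the other.

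The main obstacle is the sign of $a^{(k)}_{21}$, which encodes the direction of rotation in each region. If both entries are negative, I would first apply the global reflection $x\mapsto -x$. It fixes the line $y=0$, preserves the half-planes, and flips both signs. If the signs differ, the two flows rotate in opposite directions on the two sides. Then the origin is not a monodromic (crossing) singular point, and no normal form of the kind \eqref{sys:2.5} is meaningful. I would therefore state explicitly the standing assumption, implicit in the Hopf setting, that both linear parts induce the same rotation direction near $\Sigma$, i.e.\ $a^{(1)}_{21}a^{(2)}_{21}>0$. With this assumption the three steps yield \eqref{sys:2.5}.
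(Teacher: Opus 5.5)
Your argument is correct and is essentially the paper's proof. Both use the same rotation $T$ and the same per-region matrix: for $\tilde A_k=A_k/\omega_k$, the paper's $P_k=\begin{pmatrix}1&a_k\\0&c_k\end{pmatrix}$ is exactly your $(e_1\ A_ke_1/\omega_k)$. Both also use the same per-region time scaling $\tau=\omega_k t$ and the same gluing argument, that both changes are the identity on $y=0$; you only swap the order of time scaling and coordinate change.

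Your extra checks address points the paper passes over silently: that $a^{(k)}_{21}\neq0$, and that the sign of $a^{(k)}_{21}$ must be controlled. The same-rotation assumption you add is genuinely needed. With $c_1c_2<0$, the paper's $P_k$ would send both regions into the same half-plane, and the origin would not be monodromic.
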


\begin{proof}
    By Lemma~\ref{rotation transformation}, we first apply the linear transformation $\mathbf{u} = T\mathbf{X}$ to align the switching boundary $\Sigma$ with the horizontal axis $v=0$ in the intermediate coordinates $\mathbf{u} = (u, v)^T$. The linear parts of the two subsystems are then governed by the matrices $\bar{A}_k = T A_k T^{-1}$ ($k=1, 2$), which retain the purely imaginary eigenvalues $\pm i\omega_k$.

    For each subsystem, introducing the piecewise time scaling $\tau = \omega_k t$ yields the rescaled linear matrices $\tilde{A}_k = \frac{1}{\omega_k} \bar{A}_k$, which now have eigenvalues $\pm i$. Thus, we can write
    \begin{equation*}
        \tilde{A}_k = \begin{pmatrix} a_k & b_k \\ c_k & -a_k \end{pmatrix},
    \end{equation*}
    where $a_k^2 + b_k c_k = -1$ and $c_k \neq 0$. To transform each $\tilde{A}_k$ into $J$, we apply the piecewise coordinate transformation $\mathbf{u} = P_k\mathbf{X}$ with
    \begin{equation*}
        P_k = \begin{pmatrix} 1 & a_k \\ 0 & c_k \end{pmatrix}.
    \end{equation*}
    A direct calculation gives $P_k^{-1}\widetilde A_kP_k=J$. Furthermore, on the switching boundary $y=0$, both transformations yield $u = x$ and $v = 0$. This ensures that the global transformation is continuous across the boundary, mapping $v=0$ precisely to $y=0$ and completing the proof.
\end{proof}

The preceding two lemmas establish that planar switching system \eqref{sys:2.41} can be reduced to the unified form \eqref{sys:2.5}. 

Consider the simplified planar switching system \eqref{sys:2.5}. For the subsystem defined in the lower half-plane ($y < 0$), we introduce the transformation $(x, y, t) \mapsto (x, -y, -t)$. Consequently, the original switching system is transformed into two subsystems, $S^{(1)}$ and $S^{(2)}$, both now defined on the common upper half-plane $\mathcal{D} = \{(x,y) \mid y \ge 0\}$. Assume $S^{(i)}$, $i=1,2$ have the form
\begin{equation}\label{poly}
    S^{(i)}: \begin{cases}
                \dot{x} = -y + P^{(i)}(x, y), \\
                \dot{y} = x + Q^{(i)}(x, y),
             \end{cases} 
\end{equation}
where
\begin{align}
    P^{(i)}(x, y) = \sum_{k=2}^{\infty} P_k^{(i)}(x, y), \quad Q^{(i)}(x, y) = \sum_{k=2}^{\infty} Q_k^{(i)}(x, y),
\end{align}
and $P_k^{(i)}$ and $Q_k^{(i)}$ are homogeneous polynomials of degree $k$ in variables $x$ and $y$.

Applying the standard polar coordinate transformation $x = r \cos \theta$ and $y = r \sin \theta$ to \eqref{poly}, we obtain
\begin{align*}
    \dot{r} &= \frac{1}{r} \sum_{k=2}^{\infty} \left[ xP_k^{(i)}(x, y) + yQ_k^{(i)}(x, y) \right] \nonumber \\
            &= \sum_{k=2}^{\infty} A_k^{(i)}(\theta) r^k,  \\
    \dot{\theta} &= 1 + \frac{\sum_{k=2}^{\infty} \left[ xQ_k^{(i)}(x, y) - yP_k^{(i)}(x, y) \right]}{r^2} \nonumber \\
                &= 1 + \sum_{k=2}^{\infty} B_k^{(i)}(\theta) r^{k-1}, 
\end{align*}
where
\begin{align*}
    A_k^{(i)}(\theta)&=\cos\theta P_k^{(i)}(\cos\theta,\sin\theta)+\sin\theta Q_k^{(i)}(\cos\theta,\sin\theta),\\
    B_k^{(i)}(\theta)&=\cos\theta Q_k^{(i)}(\cos\theta,\sin\theta)-\sin\theta P_k^{(i)}(\cos\theta,\sin\theta).
\end{align*}

Therefore, 
\begin{equation}
    \frac{dr}{d\theta} = \frac{\sum_{k=2}^{\infty} A_k^{(i)}(\theta)r^k}{1 + \sum_{k=2}^{\infty} B_k^{(i)}(\theta)r^{k-1}}.
\end{equation}
Expanding the right-hand side at $r=0$ gives
\begin{equation}\label{eq:dr/dtheta}
    \frac{dr}{d\theta}=\sum_{k=2}^{\infty} R_k^{(i)}(\theta)r^k,
\end{equation}
where
\begin{align*}
    R_2^{(i)}(\theta) &= A_2^{(i)}(\theta), \\
    R_3^{(i)}(\theta) &= A_3^{(i)}(\theta) - A_2^{(i)}(\theta)B_2^{(i)}(\theta), \\
    R_4^{(i)}(\theta) &= A_4^{(i)}(\theta) - A_2^{(i)}(\theta)B_3^{(i)}(\theta) - A_3^{(i)}(\theta)B_2^{(i)}(\theta) \\
    &\quad + A_2^{(i)}(\theta)(B_2^{(i)}(\theta))^2, \\
    &\dots
\end{align*}
Let $r_i(\theta,h)$ denote the solution of $S^{(i)}$ with $r_i(0,h)=h$, and write
\begin{align}\label{eq:ri}
    r_i &= \tilde{r}_i(\theta, h) = \sum_{k=1}^{\infty} u^{(i)}_k(\theta)h^k, 
\end{align}
satisfying the initial conditions $u^{(i)}_1(0) = 1$ and $u^{(i)}_k(0) = 0$ for any $k \ge 2$. Substituting \eqref{eq:ri} back into \eqref{eq:dr/dtheta} allows us to solve for the coefficients $u^{(i)}_k(\theta)$ iteratively. This yields
\begin{equation*}\label{substitute}
    \sum_{k=1}^{\infty} \frac{d u_k^{(i)}(\theta)}{d\theta}h^k = \sum_{m=2}^{\infty} R_m^{(i)}(\theta) \left( \sum_{k=1}^{\infty} u^{(i)}_k(\theta)h^k \right)^m.
\end{equation*}

By expanding the right-hand side of \eqref{substitute} and equating the coefficients of $h^k$ on both sides, we obtain a sequence of differential equations for $u^{(i)}_k(\theta)$
\begin{align*}
    \frac{d u_1^{(i)}(\theta)}{d\theta} &= 0, \nonumber\\
    \frac{d u_2^{(i)}(\theta)}{d\theta} &= R_2^{(i)}(\theta)\left(u_1^{(i)}(\theta)\right)^2, \nonumber\\
    \frac{d u_3^{(i)}(\theta)}{d\theta} &= 2R_2^{(i)}(\theta)u_1^{(i)}(\theta)u_2^{(i)}(\theta)+R_3^{(i)}(\theta)\left(u_1^{(i)}(\theta)\right)^3, \nonumber\\
    \frac{d u_4^{(i)}(\theta)}{d\theta} &= R_2^{(i)}(\theta)\left[\left(u_2^{(i)}(\theta)\right)^2+2u_1^{(i)}(\theta)u_3^{(i)}(\theta)\right] \nonumber\\
    &\quad +3R_3^{(i)}(\theta)\left(u_1^{(i)}(\theta)\right)^2u_2^{(i)}(\theta)+R_4^{(i)}(\theta)\left(u_1^{(i)}(\theta)\right)^4, \nonumber\\
    &\cdots
\end{align*}

With the initial conditions $u^{(i)}_1(0) = 1$ and $u^{(i)}_k(0) = 0$ for $k \ge 2$, these equations can be solved recursively
\begin{align*}
    u^{(i)}_1(\theta) &= 1, \nonumber \\
    u^{(i)}_2(\theta) &= \int_0^\theta R_2^{(i)}(\varphi) d\varphi, \nonumber \\
    u^{(i)}_3(\theta) &= \int_0^\theta \left[ 2R_2^{(i)}(\varphi) u_2(\varphi) + R_3^{(i)}(\varphi) \right] d\varphi, \nonumber \\
    u^{(i)}_4(\theta) &= \int_0^\theta \left[ R_2^{(i)}(\varphi) \left( u_2^2(\varphi) + 2u_3(\varphi) \right) + 3R_3^{(i)}(\varphi) u_2(\varphi) + R_4^{(i)}(\varphi) \right] d\varphi, \nonumber \\
    &\dots
\end{align*}

Under the transformation $(x,y,t)\mapsto(x,-y,-t)$, the negative half-return map of the lower subsystem becomes the positive half-return map of the folded subsystem $S^{(2)}$. We therefore define the successive functions for the two subsystems at $\theta = \pi$ as
\begin{align}
    \Delta_1(h) &= \tilde{r}_1(\pi, h) - h, \\
    \Delta_2(h) &= \tilde{r}_2(\pi, h) - h,
\end{align}
respectively. 

\begin{definition}\cite{gasull2003center}\label{def:Vk_poincare}
    Let
\begin{equation}
\begin{aligned}
    \Delta(h) &= \Delta_1(h) - \Delta_2(h) = \tilde{r}_1(\pi, h) - \tilde{r}_2(\pi, h) \\
    &= \sum_{k=1}^{\infty} \left( u^{(1)}_k(\pi) - u^{(2)}_k(\pi) \right) h^k \\
    &= \sum_{k=1}^{\infty} V_k h^k.
\end{aligned}
\end{equation}
Then $\Delta(h)$ is called the successive function, and $V_k$ is called the $k$th-order Lyapunov constant of the switching system \eqref{sys:2.5}.
\end{definition}

\begin{remark}
$V_1$ is determined by the linear part of the two subsystems. For system~\eqref{sys:2.5}, 
\begin{equation}
    V_1=u^{(1)}_1(\pi) - u^{(2)}_1(\pi)=0.
\end{equation}
\end{remark}

\begin{lemma}\label{lemma:normal_form}\cite{han2012normal}
There exists a near-identity transformation
\begin{equation}\label{eq:near_identity}
    r=\rho+\sum_{k=2}^{N}h_k^{(i)}(\theta)\rho^k,\quad h_k^{(i)}(0)=h_k^{(i)}(\pi)=0, \quad i=
    1,2,\quad N\ge2,
\end{equation}
which transforms \eqref{eq:dr/dtheta} into the following form
\begin{equation}\label{eq:normal_form}
    \frac{d\rho}{d\theta}=\sum_{k=2}^{N}g_k^{(i)}\rho^k+O(\rho^{N+1}).
\end{equation}
\end{lemma}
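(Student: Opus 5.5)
We construct the transformation \eqref{eq:near_identity} one order at a time, separately for each $i$, by substituting it into \eqref{eq:dr/dtheta} and matching powers of $\rho$. Suppose $h_2^{(i)},\dots,h_{k-1}^{(i)}$ and $g_2^{(i)},\dots,g_{k-1}^{(i)}$ are already chosen, so that the transformed equation has the form \eqref{eq:normal_form} up to order $k-1$. Differentiate $r=\rho+\sum_j h_j^{(i)}(\theta)\rho^j$ along solutions:
\begin{equation*}
\frac{dr}{d\theta}=\Big(1+\sum_{j\ge2} j\,h_j^{(i)}\rho^{j-1}\Big)\frac{d\rho}{d\theta}+\sum_{j\ge2}\frac{dh_j^{(i)}}{d\theta}\rho^j .
\end{equation*}
Setting this equal to $\sum_m R_m^{(i)}(\theta)\,r^m$ and collecting the coefficient of $\rho^k$ gives the homological equation
\begin{equation*}
\frac{dh_k^{(i)}}{d\theta}+g_k^{(i)}=\Phi_k^{(i)}(\theta).
\end{equation*}
Here $\Phi_k^{(i)}$ collects $R_k^{(i)}$ together with polynomial expressions in $R_m^{(i)}$ ($m<k$), $h_j^{(i)}$ ($j<k$) and $g_j^{(i)}$ ($j<k$). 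It is therefore a known function of $\theta$. The key structural point is that $h_k^{(i)}$ enters linearly and only through its derivative at order $k$, since $h_k\rho^{k-1}\cdot d\rho/d\theta$ is of order at least $k+1$.

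Next we solve the homological equation on $[0,\pi]$ with both boundary conditions $h_k^{(i)}(0)=h_k^{(i)}(\pi)=0$. Integrating from $0$ gives $h_k^{(i)}(\theta)=\int_0^\theta\Phi_k^{(i)}(\varphi)\,d\varphi-g_k^{(i)}\theta$, which already satisfies $h_k^{(i)}(0)=0$. The second condition $h_k^{(i)}(\pi)=0$ is met by the unique choice
\begin{equation*}
g_k^{(i)}=\frac1\pi\int_0^\pi\Phi_k^{(i)}(\varphi)\,d\varphi,
\end{equation*}
that is, $g_k^{(i)}$ is the half-period mean of $\Phi_k^{(i)}$. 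The constant $g_k^{(i)}$ is exactly the free parameter that absorbs the one obstruction to solvability, because the constant term of $\Phi_k^{(i)}$ cannot be removed by a derivative with zero boundary values.

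Finally, we iterate for $k=2,\dots,N$. The base case is $k=2$, where $\Phi_2^{(i)}=R_2^{(i)}$. At each step the new $h_k^{(i)}$ is smooth, because $\Phi_k^{(i)}$ is built from smooth functions (in fact trigonometric polynomials times polynomials in $\theta$). After step $N$ the remaining terms are $O(\rho^{N+1})$. Near $\rho=0$ the map $\rho\mapsto r$ is a near-identity diffeomorphism, so it can be inverted locally, which justifies the change of variable.

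The main obstacle is the bookkeeping showing that the order-$k$ equation contains $h_k^{(i)}$ only through $\frac{dh_k^{(i)}}{d\theta}$ and $g_k^{(i)}$ with coefficient one. To handle it, we write $d\rho/d\theta=\sum g_j\rho^j$ and expand $r^m=\rho^m(1+\sum h_j\rho^{j-1})^m$. In the product $(1+\sum jh_j\rho^{j-1})\cdot\sum g_j\rho^j$, the coefficient of $\rho^k$ involves only $g_j$ and $h_l$ with $j+l-1=k$ and $j\ge2$, hence only $l<k$. In the expansion of $R_m r^m$ with $m\ge2$, the coefficient of $\rho^k$ likewise involves only $h_j$ with $j\le k-1$. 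Together these two facts show that $\Phi_k^{(i)}$ depends only on lower-order data, which is what the induction requires.
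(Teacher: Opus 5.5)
Your proposal is correct and takes essentially the same route as the paper. The paper only cites Han and Yu for the lemma itself, but its Section~3 derives exactly your order-$k$ homological equation $\frac{dh_k^{(i)}}{d\theta}=\Gamma_k^{(i)}(\theta)-g_k^{(i)}$ in \eqref{eq:homological_theta}, where $\Gamma_k^{(i)}$ depends only on lower-order data just as your $\Phi_k^{(i)}$ does; Theorem~\ref{thm:algebraic_normal_form} then sets $g_k^{(i)}=\frac{1}{\pi}\int_0^\pi\Gamma_k^{(i)}\,d\theta$ and obtains $h_k^{(i)}$ by integrating from $0$, exactly as you do to enforce $h_k^{(i)}(0)=h_k^{(i)}(\pi)=0$.
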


\begin{remark}
    The form given by \eqref{eq:normal_form} is referred to as the normal form of \eqref{eq:dr/dtheta}.
\end{remark}

\begin{proposition}\label{prop:normal_form_constant}
Assume that 
\begin{equation*}
    V_1=V_2=\cdots=V_{k-1}=0, \quad k\ge2.
\end{equation*}
Then 
\begin{equation}\label{eq:Vk_normal_form}
    V_k=\pi(g_k^{(1)}-g_k^{(2)}).
\end{equation}
\end{proposition}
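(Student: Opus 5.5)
The approach is to use the boundary conditions $h_k^{(i)}(0)=h_k^{(i)}(\pi)=0$ in Lemma~\ref{lemma:normal_form} to identify the half-return maps in the variable $r$ with those in the normal-form variable $\rho$. Then I compute the half-return map of the normal form \eqref{eq:normal_form} explicitly up to order $h^k$, and conclude by induction on $k$. Throughout I fix $N\ge k$, so the remainder $O(\rho^{N+1})$ does not affect any coefficient of order $\le k$.

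\textbf{Step 1 (reduction to the normal form).} Let $\rho_i(\theta,h)$ be the solution of \eqref{eq:normal_form} for subsystem $S^{(i)}$ with $\rho_i(0,h)=h$. Since $h_j^{(i)}(0)=0$, the transformation \eqref{eq:near_identity} gives $r=\rho=h$ at $\theta=0$. Hence $\tilde r_i(\theta,h)=\rho_i(\theta,h)+\sum_{j} h_j^{(i)}(\theta)\rho_i(\theta,h)^j$, by uniqueness of solutions of \eqref{eq:dr/dtheta}. Evaluating at $\theta=\pi$ and using $h_j^{(i)}(\pi)=0$ yields $\tilde r_i(\pi,h)=\rho_i(\pi,h)$ up to $O(h^{N+1})$. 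Therefore $\Delta(h)=\rho_1(\pi,h)-\rho_2(\pi,h)+O(h^{N+1})$.

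\textbf{Step 2 (structure of the normal-form half-map).} The equation $d\rho/d\theta=G_i(\rho):=\sum_{j=2}^N g_j^{(i)}\rho^j$ is autonomous with constant coefficients. Write $\rho_i(\pi,h)=h+\sum_{j\ge2} c_j^{(i)}h^j$ and substitute $\rho_i=h+\sum_j w_j(\theta)h^j$ with $w_j(0)=0$, as was done for the $u_k^{(i)}$. This gives $w_j'=g_j^{(i)}+(\text{polynomial in } g_2^{(i)},\dots,g_{j-1}^{(i)}\text{ and } w_2,\dots,w_{j-1})$, where every term in the bracket involves only lower-index coefficients. Equivalently, one can use the Lie series $\rho(\pi)=\exp(\pi G_i\,\partial_\rho)\rho\,|_{\rho=h}$. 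By induction on $j$, each $w_j(\theta)$ is a polynomial in $\theta$ whose coefficients are universal polynomials in $g_2^{(i)},\dots,g_j^{(i)}$, the same polynomials for $i=1$ and $i=2$. Moreover $g_j^{(i)}$ enters only through the linear term $\theta g_j^{(i)}$. Hence
\begin{equation*}
c_j^{(i)}=\pi g_j^{(i)}+\Phi_j\bigl(g_2^{(i)},\dots,g_{j-1}^{(i)}\bigr),
\end{equation*}
with $\Phi_j$ a universal polynomial and $\Phi_2=0$. Establishing this structural claim carefully, in particular that $\Phi_j$ does not depend on $i$ and contains no $g_j$, is the main point of the argument. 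I would do it directly from the recursive ODEs, noting that the recursion for the $w_j$ is literally the same for both subsystems apart from the labels of the $g$'s.

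\textbf{Step 3 (induction).} By Step 1, $V_j=c_j^{(1)}-c_j^{(2)}$ for $2\le j\le k$. I claim that $V_2=\dots=V_{m}=0$ implies $g_j^{(1)}=g_j^{(2)}$ for all $j\le m$. For $m=2$ this holds because $V_2=\pi(g_2^{(1)}-g_2^{(2)})$. For the inductive step, the equality of the lower-index $g$'s makes the $\Phi_{m}$ terms cancel, so $V_m=\pi(g_m^{(1)}-g_m^{(2)})=0$. Applying the claim with $m=k-1$ gives $\Phi_k(g^{(1)}_2,\dots)=\Phi_k(g^{(2)}_2,\dots)$, and therefore $V_k=c_k^{(1)}-c_k^{(2)}=\pi(g_k^{(1)}-g_k^{(2)})$, which is \eqref{eq:Vk_normal_form}.
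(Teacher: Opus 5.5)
Your proposal is correct and follows essentially the same route as the paper. Your triangular structure $c_j^{(i)}=\pi g_j^{(i)}+\Phi_j(g_2^{(i)},\dots,g_{j-1}^{(i)})$ with $\Phi_2=0$ is the paper's $u_j^{(i)}(\pi)=\pi g_j^{(i)}+Q_j(\pi;g_2^{(i)},\dots,g_{j-1}^{(i)})$, and it is followed by the same successive cancellation $g_j^{(1)}=g_j^{(2)}$ for $j\le k-1$. Your Step 1, which uses $h_j^{(i)}(0)=h_j^{(i)}(\pi)=0$ to identify $\tilde r_i(\pi,h)$ with $\rho_i(\pi,h)$, spells out a point the paper only asserts when it writes $\Delta(h)=\rho_1(\pi;h)-\rho_2(\pi;h)$.
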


\begin{proof}
Let $\rho_i(\theta;h)$ be the solution of \eqref{eq:normal_form} satisfying $\rho_i(0;h)=h$. Integrating \eqref{eq:normal_form} from $0$ to $\theta$ gives
\begin{equation}\label{eq:normal_form_integral}
\rho_i(\theta;h)=h+\sum_{j=2}^{N}g_j^{(i)}\int_0^\theta \rho_i(s;h)^j\,ds+O(h^{N+1}),\quad i=1,2.
\end{equation}
Write
\begin{equation}\label{eq:rho_expansion}
    \rho_i(\theta;h)=h+\sum_{j=2}^{k}u_j^{(i)}(\theta)h^j+O(h^{k+1}),\quad u_j^{(i)}(0)=0.
\end{equation}
Substitution of \eqref{eq:rho_expansion} into \eqref{eq:normal_form_integral} and comparison of the coefficients of $h^j$ yield
\begin{equation}\label{eq:uj_triangular}
    u_j^{(i)}(\theta)=\theta g_j^{(i)}+Q_j\left(\theta;g_2^{(i)},\cdots,g_{j-1}^{(i)}\right),\quad Q_2=0,
\end{equation}
where $Q_j$ depends only on the lower-order normal-form coefficients. By Definition \ref{def:Vk_poincare}, the successive function is
\begin{equation}\label{eq:successive_from_rho}
    \Delta(h)=\rho_1(\pi;h)-\rho_2(\pi;h)=\sum_{j=2}^{k}V_jh^j+O(h^{k+1}).
\end{equation}
It follows from \eqref{eq:uj_triangular} and \eqref{eq:successive_from_rho} that
\begin{align}
    V_j={}&\pi\left(g_j^{(1)}-g_j^{(2)}\right)+Q_j\left(\pi;g_2^{(1)},\cdots,g_{j-1}^{(1)}\right) \notag\\
    &-Q_j\left(\pi;g_2^{(2)},\cdots,g_{j-1}^{(2)}\right).
\label{eq:Vj_triangular}
\end{align}
Since $Q_2=0$, the conditions $V_2=\cdots=V_{k-1}=0$ imply successively
\begin{equation}\label{eq:equal_lower_g}
    g_j^{(1)}=g_j^{(2)},\quad j=2,\cdots,k-1.
\end{equation}
Setting $j=k$ in \eqref{eq:Vj_triangular} and using \eqref{eq:equal_lower_g} proves \eqref{eq:Vk_normal_form}.
\end{proof}

The following lemma gives sufficient conditions for the existence of small-amplitude limit cycles in the switching system \eqref{sys:2.5}.

\begin{lemma}\cite{han2012normal}\label{limit cycle}
    Assume $V_j$ depends on $k$ parameters, i.e., 
    \begin{equation*}
        V_j=c_j(\varepsilon_1,\varepsilon_2,\cdots,\varepsilon_k),\quad j=0,1,\cdots,k.
    \end{equation*}
    Assume $c_j(\mathbf{0})=0$, $j=0,1,\cdots,k-1$, $c_k(\mathbf{0})\neq 0$ and 
    \begin{equation*}
        det[\frac{\partial(c_0,c_1,\cdots,c_{k-1})}{\partial(\varepsilon_1,\varepsilon_2,\cdots,\varepsilon_k)}(\mathbf{0})]\neq 0.
    \end{equation*}
    Then for any given $\varepsilon_0>0$, there exist $\varepsilon_1,\varepsilon_2,\cdots,\varepsilon_k$, and $\delta>0$ with $|\varepsilon_j|<\varepsilon_0$, $j=1,2,\cdots,k$, such that the system has exactly $k$ limit cycles.
\end{lemma}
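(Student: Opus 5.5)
The statement is a local bifurcation result. I would treat it as a Weierstrass-preparation, Rolle-type argument on the successive function $\Delta(h)=\sum_j V_j h^j$, where $V_j=c_j(\varepsilon)$. I read the indexing so that $c_0,\dots,c_{k-1}$ are the first $k$ coefficients that can be made to vanish, possibly after factoring out $h$ (in practice $c_{j}$ is attached to $h^{j+1}$ or similar), and $c_k$ is the first coefficient that does not vanish at $\varepsilon=\mathbf 0$.

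\textbf{Step 1 (dictionary).} Relate zeros of $\Delta$ to limit cycles. A positive isolated zero $h$ of $\Delta(h)=\tilde r_1(\pi,h)-\tilde r_2(\pi,h)$ means that the upper half-orbit starting at $(h,0)$ and the folded lower half-orbit reach the same point on the negative $x$-axis. Undoing the fold $(x,y,t)\mapsto(x,-y,-t)$, these glue into a closed crossing orbit of \eqref{sys:2.5}, and an isolated such orbit is a limit cycle. Since the Jacobian condition makes $\varepsilon\mapsto(c_0,\dots,c_{k-1})$ a local diffeomorphism, I would reparametrize and use $c_0,\dots,c_{k-1}$ themselves as independent small parameters. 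So it suffices to show that the function
\begin{equation*}
\Delta(h)=h^{m}\Big(\sum_{j=0}^{k-1}c_jh^j+c_kh^k(1+O(h))\Big)
\end{equation*}
has exactly $k$ small positive zeros for suitable small $c_j$, and no more.

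\textbf{Step 2 (upper bound).} Let $G=\Delta/h^m$. Then $G$ is analytic in $(h,\varepsilon)$, and $\partial_h^kG(0,\mathbf 0)=k!\,c_k(\mathbf 0)\neq0$. By the Weierstrass preparation theorem, or by Rolle's theorem applied $k$ times, $G$ has at most $k$ zeros in some fixed interval $0<h<\delta$ for all small $\varepsilon$.

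\textbf{Step 3 (realization).} Attain $k$ zeros by successive sign changes. Start with $c_0=\dots=c_{k-1}=0$, so that $G\approx c_kh^k$. Then choose $|c_{k-1}|\ll|c_k|$ with sign opposite to $c_k$; this creates one new simple positive zero near $-c_{k-1}/c_k$ while the earlier zeros persist by simplicity. Continue with $|c_{k-2}|\ll|c_{k-1}|$ with sign opposite to $c_{k-1}$, and so on down to $c_0$. Each step adds one simple zero close to the origin and keeps all $|c_j|<\varepsilon_0$. The transversality guaranteed by Step 1 lets these choices be pulled back to values of $\varepsilon_1,\dots,\varepsilon_k$.

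\textbf{Main obstacle.} I expect the hardest part to be the index bookkeeping together with the claim that each zero gives a genuine crossing limit cycle. In particular, I need the identification of the half-maps at $\theta=\pi$ with points on the switching line to be valid uniformly in $\varepsilon$, so that there are no sliding segments near the origin. Once that holds, the count of zeros of $\Delta$ transfers directly to the count of limit cycles.
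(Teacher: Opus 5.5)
The paper gives no proof of this lemma; it imports it from \cite{han2012normal}. Your argument is correct and is the standard proof given there: use the inverse function theorem to take $c_0,\dots,c_{k-1}$ as coordinates, use Rolle's theorem with $c_k(\mathbf 0)\neq0$ for the upper bound, and choose $c_{k-1},\dots,c_0$ successively small with alternating signs to create $k$ simple positive zeros. Your index shift $c_j=V_{j+m}$ is also how the paper applies the lemma ($m=1$ in Example 1, $m=2$ for the Alpazur oscillator).

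The obstacle you flag is harmless. On $y=0$ both subsystems of \eqref{sys:2.5} satisfy $\dot y=x(1+O(x))$, with the $O(x)$ term uniform in the parameters, so every point $(x,0)$ with small $x\neq0$ is a crossing point and there is no sliding segment near the origin.
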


The center problem in switching systems is more complicated than that of smooth systems. The following symmetry criterion will be used to identify center conditions.
\begin{lemma}\label{lem:symmetry_center}\cite{li2015center}
    If system \eqref{sys:2.5} is symmetric with respect to the $x$-axis, i.e. the functions on the right-hand side of system \eqref{sys:2.5} satisfy
    \begin{equation}
        X_k^{(1)}(x, y) = -X_k^{(2)}(x, -y), \quad Y_k^{(1)}(x, y) = Y_k^{(2)}(x, -y),
    \end{equation}
    or if system \eqref{sys:2.5} is symmetric with respect to the $y$-axis, i.e. the functions on the right-hand side of system \eqref{sys:2.5} satisfy
    \begin{equation}
        \begin{aligned}  
            X_k^+(x, y) &= X_k^+(-x, y), & X_k^-(x, y) &= X_k^-(-x, y), \\  
            Y_k^+(x, y) &= -Y_k^+(-x, y), & Y_k^-(x, y) &= -Y_k^-(-x, y),  
        \end{aligned}
    \end{equation}
    then the origin of system \eqref{sys:2.5} is a center.
\end{lemma}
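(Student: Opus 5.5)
For this lemma I would argue directly from the geometry of the half-return maps, without using the Lyapunov constants $V_k$. The origin is a center exactly when, for every sufficiently small $h>0$, the orbit starting at $(h,0)$ returns to $(h,0)$ after one upper and one lower half-turn. Since the linear part of both subsystems is the rotation $J$, both half-return maps are defined and analytic for small $h$. In the folded notation of the paper, the center condition becomes $\tilde r_1(\pi,h)=\tilde r_2(\pi,h)$ identically, i.e.\ $\Delta(h)\equiv 0$ in Definition~\ref{def:Vk_poincare}. In each symmetry case I will show that the two half-return maps compose to the identity.

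\textbf{Case 1: symmetry with respect to the $x$-axis.} I would apply the folding transformation $(x,y,t)\mapsto(x,-Y,-s)$, as in the construction of $S^{(2)}$, to the lower subsystem $\dot x=-y+X^{(2)}(x,y)$, $\dot y=x+Y^{(2)}(x,y)$. A one-line computation gives
\begin{equation*}
\frac{dx}{ds}=-Y-X^{(2)}(x,-Y),\qquad \frac{dY}{ds}=x+Y^{(2)}(x,-Y).
\end{equation*}
The hypotheses $X_k^{(1)}(x,Y)=-X_k^{(2)}(x,-Y)$ and $Y_k^{(1)}(x,Y)=Y_k^{(2)}(x,-Y)$ turn these equations into exactly $S^{(1)}$. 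Hence $S^{(1)}$ and $S^{(2)}$ are the same system, so they have the same polar equation \eqref{eq:dr/dtheta}. With the same initial data this gives $\tilde r_1(\pi,h)=\tilde r_2(\pi,h)$, and therefore $\Delta\equiv0$.

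\textbf{Case 2: symmetry with respect to the $y$-axis.} Here each subsystem is separately reversible under $R:(x,y,t)\mapsto(-x,y,-t)$. The parity conditions on $X_k^{\pm}$ and $Y_k^{\pm}$ are exactly what makes each vector field $R$-reversible, so $R$ maps orbits of each subsystem to orbits traversed backwards. Take the upper arc starting at $(h,0)$. It crosses the positive $y$-axis at a unique point $(0,y_0)$ before it reaches the negative $x$-axis, at $(-P(h),0)$ say; uniqueness holds for small $h$ because $\dot\theta\approx1$.
\begin{itemize}
\item The $R$-image of this arc is an orbit arc through the same point $(0,y_0)$.
\item By uniqueness of solutions, it is the same orbit, so the arc is symmetric about the $y$-axis.
\item Its endpoints are therefore mirror images, which gives $P(h)=h$.
\end{itemize}
The same argument in the lower half-plane shows that the lower half-return map from $(-h,0)$ lands at $(h,0)$. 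The full return map is thus the identity, and the origin is a center.

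The only delicate point is the transversality and uniqueness bookkeeping. One must check that, for small $h$, each half-orbit stays in its own closed half-plane and meets the $y$-axis and the $x$-axis exactly once. This follows from $\dot\theta=1+O(r)>0$ in polar coordinates. One must also check that the sliding and crossing behaviour on $\Sigma$ is simply crossing near the origin, which holds because $\dot y=x+O(r^2)\neq0$ for $x\neq0$ small. Both checks are routine, and I expect no real obstacle beyond stating them carefully.
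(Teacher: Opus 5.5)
Your proof is correct. The paper gives no proof of this lemma; it simply quotes it from \cite{li2015center}. Your argument is therefore a self-contained replacement for a citation, not a variant of something in the text.

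What your route buys is a direct link to the paper's own framework. In Case 1 you show that the hypothesis says exactly that the folded lower system $S^{(2)}$ coincides with $S^{(1)}$. Hence $R_k^{(1)}=R_k^{(2)}$ for every $k$, $\Delta\equiv0$ by Definition~\ref{def:Vk_poincare}, and every $V_k$ vanishes. Equivalently, Algorithm~\ref{alg:algebraic_nf} returns $g_k^{(1)}=g_k^{(2)}$ at every order. This is the form in which the lemma is used in Theorem~\ref{thm:alpazur_center_cyclicity}, where the paper asserts that all subsequent Lyapunov constants vanish, and in Case~i.1 of Theorem~\ref{thm:ten_limit_cycles}.

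In Case 2 you use the classical reversibility argument for $(x,y,t)\mapsto(-x,y,-t)$. This gives a stronger conclusion than needed: each half-return map is the identity on its own, i.e.\ $\Delta_1\equiv\Delta_2\equiv0$, not merely $\Delta\equiv0$. This case is geometric and never passes through the Lyapunov constants.

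The bookkeeping you flag is exactly what makes both cases rigorous, and it is routine as you say. Single crossings of the axes for small $h$ follow from $\dot\theta=1+O(r)$. Sliding is excluded because $\dot y=x+O(x^2)\neq0$ on $\Sigma\setminus\{0\}$ for both fields.
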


\begin{definition}\label{def:laurent_polynomial}
A \emph{Laurent polynomial} in the variable $z$ over $\mathbb{C}$ is a finite sum of the form
\[
    P(z)=\sum_{k=m}^{n} c_k z^k,
\]
where $m,n\in\mathbb{Z}$ with $m\leq n$ and $c_k\in\mathbb{C}$. The set of all Laurent polynomials is denoted by $\mathbb{C}[z,z^{-1}]$.
\end{definition}

\section{Main Results}\label{Main Results}
This section develops a symbolic algorithm for computing Lyapunov constants of planar switching systems.                         

\subsection{Lyapunov Constants of the Planar Switching System}
To accommodate the terms generated by the recursive normal-form transformations, we introduce the extended Laurent algebra
\begin{equation*}\label{eq:extended_algebra}
    \mathcal{A}:=\mathbb{C}[\theta][z,z^{-1}]=\mathbb{C}[\theta,z,z^{-1}],\quad z=e^{\mathrm{i}\theta}.
\end{equation*}
For $F(\theta,z)\in\mathcal{A}$, differentiation along $z=e^{\mathrm{i}\theta}$ is represented by
\begin{equation}\label{eq:total_derivation}
    \mathcal{D}:=\frac{\partial}{\partial\theta}+\mathrm{i}z\frac{\partial}{\partial z}.
\end{equation}
Then
\begin{equation}\label{eq:total_derivative_identity}
\frac{d}{d\theta}F\left(\theta,e^{\mathrm{i}\theta}\right)
=\left.\mathcal{D}F(\theta,z)\right|_{z=e^{\mathrm{i}\theta}}.
\end{equation}

\begin{definition}\label{def:operator_M}
Let
\begin{equation*}
    F(\theta,z)=\sum_{p=0}^{P}\sum_{j=m}^{n}c_{p,j}\theta^p z^j\in\mathcal{A}.
\end{equation*}
The map $\mathcal{M}:\mathcal{A}\to\mathbb{C}$ defined by
\begin{equation*}\label{eq:extended_mean_operator}
    \mathcal{M}[F]:=\sum_{p=0}^{P}\sum_{j=m}^{n}c_{p,j}\mu_{p,j}
\end{equation*}
is called the algebraic half-period mean operator, where
\begin{equation*}\label{eq:mean_basis_zero}
    \mu_{p,0}:=\frac{\pi^p}{p+1},
\end{equation*}
and for $j\neq0$,
\begin{equation*}\label{eq:mean_basis_nonzero}
    \mu_{0,j}:=\frac{(-1)^j-1}{j\mathrm{i}\pi},\quad \mu_{p,j}:=\frac{(-1)^j\pi^{p-1}}{j\mathrm{i}}-\frac{p}{j\mathrm{i}}\mu_{p-1,j},\qquad p\geq1.
\end{equation*}
\end{definition}

\begin{definition}\label{def:operator_I}
Define the linear operator $\mathcal I:\mathcal A\to\mathcal A$ by
\begin{equation}\label{eq:extended_integration_operator}
    \mathcal{I}[F]:=\sum_{p=0}^{P}\sum_{j=m}^{n}c_{p,j}\mathcal{I}\!\left[\theta^p z^j\right],
\end{equation}
where
\begin{equation}\label{eq:integration_basis}
    \mathcal{I}\!\left[\theta^p z^j\right]:=
\begin{cases}
\dfrac{\theta^{p+1}}{p+1}, & j=0,\\
z^j\displaystyle\sum_{q=0}^{p}\dfrac{(-1)^q p!}{(p-q)!(j\mathrm{i})^{q+1}}\theta^{p-q}-\dfrac{(-1)^p p!}{(j\mathrm{i})^{p+1}}, & j\neq0.
\end{cases}
\end{equation}
\end{definition}

Consider the trigonometric polynomial
\begin{equation}
    R(\theta)=a_0+\sum_{k=1}^{n}\left(a_k\cos k\theta+b_k\sin k\theta\right).
\end{equation}
Under the substitution $z=e^{\mathrm{i}\theta}$, it can be represented as the Laurent polynomial
\begin{equation}
    P(z)=\sum_{k=-n}^{n}c_kz^k\in\mathbb{C}[z,z^{-1}]\subset\mathcal{A},
\end{equation}
where
\begin{equation*}
    c_0=a_0,\quad c_k=\frac{a_k-\mathrm{i}b_k}{2},\quad c_{-k}=\frac{a_k+\mathrm{i}b_k}{2},\quad 1\leq k\leq n.
\end{equation*}

\begin{lemma}\label{lemma:algebraic_equivalence}
Let
\begin{equation}\label{eq:F(theta,z))}
    F(\theta,z)=\sum_{p=0}^{P}\sum_{j=m}^{n}c_{p,j}\theta^p z^j\in\mathcal{A}.
\end{equation}
Then
\begin{equation}\label{eq:extended_algebraic_equivalence}
    \mathcal{M}[F]=\frac{1}{\pi}\int_0^\pi F\left(\theta,e^{\mathrm{i}\theta}\right)\,d\theta.
\end{equation}
\end{lemma}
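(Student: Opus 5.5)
Both sides of \eqref{eq:extended_algebraic_equivalence} are $\mathbb{C}$-linear in $F$: the left side by Definition~\ref{def:operator_M}, the right side by linearity of the integral. So it suffices to check the identity on the monomials $\theta^p z^j$. Concretely, I will show that for every $p\ge 0$ and $j\in\mathbb{Z}$,
\begin{equation*}
    \mu_{p,j}=\frac{1}{\pi}\int_0^\pi \theta^p e^{\mathrm{i}j\theta}\,d\theta ,
\end{equation*}
and then sum these identities with weights $c_{p,j}$.

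\textbf{Case $j=0$.} This is immediate:
\begin{equation*}
    \frac{1}{\pi}\int_0^\pi\theta^p\,d\theta=\frac{\pi^p}{p+1}=\mu_{p,0}.
\end{equation*}

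\textbf{Case $j\neq0$.} Set $I_{p,j}:=\frac{1}{\pi}\int_0^\pi\theta^p e^{\mathrm{i}j\theta}\,d\theta$. I will show by induction on $p$ that $I_{p,j}=\mu_{p,j}$.

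For $p=0$, since $e^{\mathrm{i}j\pi}=(-1)^j$,
\begin{equation*}
    I_{0,j}=\frac{1}{\pi}\cdot\frac{e^{\mathrm{i}j\pi}-1}{j\mathrm{i}}=\frac{(-1)^j-1}{j\mathrm{i}\pi}=\mu_{0,j}.
\end{equation*}

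For $p\ge1$, integrate by parts with $u=\theta^p$ and $dv=e^{\mathrm{i}j\theta}\,d\theta$, so $v=e^{\mathrm{i}j\theta}/(j\mathrm{i})$. This gives
\begin{equation*}
    \int_0^\pi\theta^p e^{\mathrm{i}j\theta}\,d\theta=\frac{\pi^p(-1)^j}{j\mathrm{i}}-\frac{p}{j\mathrm{i}}\int_0^\pi\theta^{p-1}e^{\mathrm{i}j\theta}\,d\theta .
\end{equation*}
The boundary term at $\theta=0$ vanishes because $p\ge1$. Dividing by $\pi$ yields
\begin{equation*}
    I_{p,j}=\frac{(-1)^j\pi^{p-1}}{j\mathrm{i}}-\frac{p}{j\mathrm{i}}I_{p-1,j}.
\end{equation*}
This is exactly the recursion defining $\mu_{p,j}$ in Definition~\ref{def:operator_M}. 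The induction therefore closes, and $I_{p,j}=\mu_{p,j}$ for all $p$.

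There is no genuine obstacle in this argument. The one point needing care is the exponent of $\pi$ after dividing by $\pi$: the boundary term $\pi^p(-1)^j/(j\mathrm{i})$ becomes $\pi^{p-1}(-1)^j/(j\mathrm{i})$, and this has to match the factor $\pi^{p-1}$ in the recursion for $\mu_{p,j}$. One should also note that $F(\theta,e^{\mathrm{i}\theta})$ is a finite sum of continuous functions, so interchanging the sum and the integral is trivially justified.
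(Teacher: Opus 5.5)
Your proposal is correct and follows essentially the same route as the paper: both reduce by linearity to the monomial integrals $I_{p,j}$, evaluate the $j=0$ and $p=0$ cases directly, and use integration by parts for $p\ge1$ to obtain the recursion defining $\mu_{p,j}$. The only difference is that you spell out the induction on $p$, which the paper compresses into the phrase ``direct calculation and integration by parts.''
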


\begin{proof}
Substituting $z=e^{\mathrm{i}\theta}$ into \eqref{eq:F(theta,z))} gives
\begin{equation}
    F\left(\theta,e^{\mathrm{i}\theta}\right)=\sum_{p=0}^{P}\sum_{j=m}^{n}c_{p,j}\theta^p e^{j\mathrm{i}\theta}.
\end{equation}
Hence 
\begin{equation}\label{eq:mean_expansion}
    \frac{1}{\pi}\int_0^\pi F\left(\theta,e^{\mathrm{i}\theta}\right)\,d\theta
    =\sum_{p=0}^{P}\sum_{j=m}^{n}c_{p,j}I_{p,j}, \quad
    I_{p,j}:=\frac{1}{\pi}\int_0^\pi\theta^p e^{j\mathrm{i}\theta}\,d\theta.
\end{equation}
Direct calculation and integration by parts give
\begin{equation}\label{eq:I_pj}
I_{p,j}=
\begin{cases}
\dfrac{\pi^p}{p+1}, & j=0,\\[2mm]
\dfrac{(-1)^j-1}{j\mathrm{i}\pi}, & j\neq0,\ p=0,\\[2mm]
\dfrac{(-1)^j\pi^{p-1}}{j\mathrm{i}}-\dfrac{p}{j\mathrm{i}}I_{p-1,j}, & j\neq0,\ p\geq1.
\end{cases}
\end{equation}
Therefore,
\begin{equation}
\begin{split}
\frac{1}{\pi}\int_0^\pi F\left(\theta,e^{\mathrm{i}\theta}\right)\,d\theta
&=\sum_{p=0}^{P}\sum_{j=m}^{n}c_{p,j}I_{p,j}\\
&=\sum_{p=0}^{P}\sum_{j=m}^{n}c_{p,j}\mu_{p,j}\\
&=\mathcal{M}[F].
\end{split}
\end{equation}
\end{proof}

\begin{lemma}\label{lemma:integration_operator}
For every $F\in\mathcal{A}$, the algebraic integration operator satisfies
\begin{equation}\label{eq:integration_operator_properties}
    \mathcal{D}\mathcal{I}[F]=F,\quad
    \mathcal{I}[F](0,1)=0,\quad
    \mathcal{I}[F](\pi,-1)=\pi\mathcal{M}[F].
\end{equation}
\end{lemma}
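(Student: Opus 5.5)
By linearity of $\mathcal{I}$, $\mathcal{D}$, evaluation at a point, and $\mathcal{M}$, it suffices to verify the three identities in \eqref{eq:integration_operator_properties} on a single basis monomial $F=\theta^p z^j$. The case $j=0$ is immediate. We have $\mathcal{D}[\theta^{p+1}/(p+1)]=\theta^p$, since the $z$-derivative term vanishes. The value at $(0,1)$ is $0$. The value at $(\pi,-1)$ is $\pi^{p+1}/(p+1)=\pi\mu_{p,0}$. The substance is therefore the case $j\neq0$.

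For $j\neq 0$, write $\mathcal{I}[\theta^pz^j]=z^jG_p(\theta)-C_p$, where
\[
G_p(\theta)=\sum_{q=0}^{p}\frac{(-1)^q p!}{(p-q)!(j\mathrm{i})^{q+1}}\theta^{p-q},\qquad C_p=\frac{(-1)^p p!}{(j\mathrm{i})^{p+1}}.
\]
Since $\mathcal{D}(z^jG)=z^j\bigl(G'+j\mathrm{i}\,G\bigr)$ and $\mathcal{D}C_p=0$, the first identity reduces to the scalar ODE $G_p'+j\mathrm{i}G_p=\theta^p$. I will check this by telescoping. The $q$-th term of $j\mathrm{i}G_p$ is $\frac{(-1)^q p!}{(p-q)!(j\mathrm{i})^{q}}\theta^{p-q}$. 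The derivative of the $(q-1)$-th term of $G_p$ is $\frac{(-1)^{q-1}p!}{(p-q)!(j\mathrm{i})^{q}}\theta^{p-q}$. These two cancel for $1\le q\le p$. The $q=p$ term of $G_p$ is constant, so it contributes no derivative. Only $\theta^p$, from $q=0$ in $j\mathrm{i}G_p$, survives.

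The second identity is direct evaluation: at $\theta=0$, $z=1$, only the $q=p$ term of $G_p$ survives. That term is exactly $C_p$, so the value is $0$.

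For the third identity I avoid manipulating the recursion for $\mu_{p,j}$ directly and instead use Lemma~\ref{lemma:algebraic_equivalence}. Set $\phi(\theta)=\mathcal{I}[F](\theta,e^{\mathrm{i}\theta})$. By \eqref{eq:total_derivative_identity} and the first identity, $\phi'(\theta)=F(\theta,e^{\mathrm{i}\theta})$. By the second identity, $\phi(0)=0$. The fundamental theorem of calculus then gives
\[
\mathcal{I}[F](\pi,-1)=\phi(\pi)=\int_0^\pi F(\theta,e^{\mathrm{i}\theta})\,d\theta=\pi\mathcal{M}[F],
\]
where the last equality is Lemma~\ref{lemma:algebraic_equivalence}. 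This uses $e^{\mathrm{i}\pi}=-1$.

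The main obstacle is the index bookkeeping in the telescoping step, specifically matching the factorials and the powers of $j\mathrm{i}$ between consecutive terms. Everything else is formal. An alternative would be induction on $p$, showing $\mathcal{I}[\theta^pz^j]=\frac{1}{j\mathrm{i}}\bigl(\theta^pz^j-p\,\mathcal{I}[\theta^{p-1}z^j]\bigr)$, which mirrors the recursion for $\mu_{p,j}$. I would fall back on it only if the direct verification became messy.
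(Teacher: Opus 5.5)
Your proposal is correct and follows the paper's proof. The third identity is obtained exactly as there: differentiate $\mathcal{I}[F](\theta,e^{\mathrm{i}\theta})$ along $z=e^{\mathrm{i}\theta}$, integrate over $[0,\pi]$, use $\mathcal{I}[F](0,1)=0$, and invoke Lemma~\ref{lemma:algebraic_equivalence}; your telescoping check of $G_p'+j\mathrm{i}G_p=\theta^p$ and the evaluation at $(0,1)$ just make explicit the first two identities, which the paper says follow directly from Definition~\ref{def:operator_I}.
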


\begin{proof}
The first two identities follow directly from Definition~\ref{def:operator_I}. Along $z=e^{\mathrm{i}\theta}$, the chain rule gives
\begin{equation}
\begin{split}
    \frac{d}{d\theta}\mathcal{I}[F]\left(\theta,z\right)|_{z=e^{\mathrm{i}\theta}}
    &=\bigg(\frac{\partial \mathcal{I}[F]}{\partial\theta}\left(\theta,z\right)
    +\frac{\partial \mathcal{I}[F]}{\partial z}\left(\theta,z\right)
    \frac{d z}{d\theta}\bigg) |_{z=e^{\mathrm{i}\theta}}\\
    &=\bigg(\frac{\partial \mathcal{I}[F]}{\partial\theta}\left(\theta,z\right)
    +\mathrm{i}e^{\mathrm{i}\theta}
    \frac{\partial \mathcal{I}[F]}{\partial z}\left(\theta,z\right)\bigg)|_{z=e^{\mathrm{i}\theta}}\\
    &=\left.
    \left(\frac{\partial}{\partial\theta}
    +\mathrm{i}z\frac{\partial}{\partial z}\right)\mathcal{I}[F](\theta,z)
    \right|_{z=e^{\mathrm{i}\theta}}\\
    &=\left.\mathcal D\mathcal{I}[F](\theta,z)\right|_{z=e^{\mathrm{i}\theta}}\\
    &=F(\theta,e^{\mathrm{i}\theta}).
\end{split}
\end{equation}
Hence,
\begin{align}
    \mathcal{I}[F](\pi,-1)-\mathcal{I}[F](0,1)
    &=\int_0^\pi F\left(\theta,e^{\mathrm{i}\theta}\right)\,d\theta \notag\\
    &=\pi\mathcal{M}[F],
\end{align}
where the last equality follows from Lemma~\ref{lemma:algebraic_equivalence}. Since $\mathcal{I}[F](0,1)=0$, the third identity follows.
\end{proof}

For any fixed $N\geq2$, Lemma~\ref{lemma:normal_form} guarantees the existence of a near-identity transformation
\begin{equation}\label{eq:transform_def}
    r=\rho+\sum_{k=2}^{N}h_k^{(i)}(\theta)\rho^k,
\end{equation}
where
$h_k^{(i)}(0)=h_k^{(i)}(\pi)=0$, 
which transforms the radial equation into
\begin{equation}\label{eq:truncated_normal_form}
    \frac{d\rho}{d\theta}=\sum_{k=2}^{N}g_k^{(i)}\rho^k+O(\rho^{N+1}).
\end{equation}
Differentiating \eqref{eq:near_identity} with respect to $\theta$ and using \eqref{eq:normal_form}, we obtain
\begin{equation}\label{eq:derivative_transformation}
    \frac{dr}{d\theta}
    =\sum_{k=2}^{N}\frac{d h_k^{(i)}(\theta)}{d\theta}\rho^k
    +\left(1+\sum_{j=2}^{N}j h_j^{(i)}(\theta)\rho^{j-1}\right)
    \sum_{m=2}^{N}g_m^{(i)}\rho^m
    +O(\rho^{N+1}).
\end{equation}
Substituting \eqref{eq:near_identity} into \eqref{eq:dr/dtheta} gives
\begin{equation}\label{eq:substituted_radial_equation}
    \frac{dr}{d\theta}
    =\sum_{j=2}^{N}R_j^{(i)}(\theta)
    \left(\rho+\sum_{m=2}^{N}h_m^{(i)}(\theta)\rho^m\right)^j
    +O(\rho^{N+1}).
\end{equation}
For $2\leq k\leq N$, define
\begin{equation}\label{eq:lower_order_terms}
    \mathcal{H}_{<k}^{(i)}(\theta,\rho):=\sum_{\ell=2}^{k-1}h_\ell^{(i)}(\theta)\rho^\ell,\quad
    G_{<k}^{(i)}(\rho):=\sum_{m=2}^{k-1}g_m^{(i)}\rho^m,
\end{equation}
Comparing the coefficients of $\rho^k$ yields
\begin{equation}\label{eq:homological_theta}
    \frac{d h_k^{(i)}(\theta)}{d\theta}=\Gamma_k^{(i)}(\theta)-g_k^{(i)},
\end{equation}
where
\begin{equation}\label{eq:source_term}
    \Gamma_k^{(i)}(\theta):=[\rho^k]\left\{
    \sum_{j=2}^{k}R_j^{(i)}(\theta)
    \left(\rho+\mathcal{H}_{<k}^{(i)}(\theta,\rho)\right)^j
    -\frac{\partial\mathcal{H}_{<k}^{(i)}(\theta,\rho)}{\partial\rho}
    G_{<k}^{(i)}(\rho)
    \right\},
\end{equation}
and $[\rho^k]$ denotes the coefficient of $\rho^k$.

For the algebraic computation, let
$\widehat{h}_k^{(i)}(\theta,z),\widetilde{P}_k^{(i)}(\theta,z)\in\mathcal{A}$
be the algebraic representations of $h_k^{(i)}(\theta)$ and $\Gamma_k^{(i)}(\theta)$, respectively. Thus,
\begin{equation}\label{eq:algebraic_representations}
h_k^{(i)}(\theta)=\widehat{h}_k^{(i)}\left(\theta,e^{\mathrm{i}\theta}\right),\qquad
\Gamma_k^{(i)}(\theta)=\widetilde{P}_k^{(i)}\left(\theta,e^{\mathrm{i}\theta}\right).
\end{equation}
Therefore, the algebraic representation of \eqref{eq:homological_theta} is
\begin{equation}\label{eq:algebraic_homological_equation}
    \mathcal{D}\widehat{h}_k^{(i)}(\theta,z)
    =\widetilde{P}_k^{(i)}(\theta,z)-g_k^{(i)}.
\end{equation}
The following theorem determines $g_k^{(i)}$ and $\widehat{h}_k^{(i)}$ by means of the algebraic operators $\mathcal{M}$ and $\mathcal{I}$.

\begin{theorem}\label{thm:algebraic_normal_form}
Let $\widetilde{P}_k^{(i)}(\theta,z)\in\mathcal{A}$ be the algebraic representation of $\Gamma_k^{(i)}(\theta)$. Then the normal-form coefficient $g_k^{(i)}$ is given by
\begin{equation}\label{eq:thm_gk}
    g_k^{(i)}=\mathcal{M}\left[\widetilde{P}_k^{(i)}\right],
\end{equation}
and the algebraic representation of the transformation function $h_k^{(i)}(\theta)$ is
\begin{equation}\label{eq:thm_hk}
    \widehat{h}_k^{(i)}(\theta,z)
    =\mathcal{I}\left[\widetilde{P}_k^{(i)}(\theta,z)-g_k^{(i)}\right].
\end{equation}
Consequently,
\begin{equation*}\label{eq:actual_transformation_function}
    h_k^{(i)}(\theta)=\widehat{h}_k^{(i)}\left(\theta,e^{\mathrm{i}\theta}\right),
\end{equation*}
and
\begin{equation*}\label{eq:transformation_boundary_conditions}
    h_k^{(i)}(0)=h_k^{(i)}(\pi)=0.
\end{equation*}
\end{theorem}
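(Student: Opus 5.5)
The plan is to reduce the statement to the scalar homological equation \eqref{eq:homological_theta} with the two boundary conditions $h_k^{(i)}(0)=h_k^{(i)}(\pi)=0$. Then I show that the operator identities of Lemma~\ref{lemma:integration_operator} force the stated formulas. I would argue by induction on $k$. The inductive hypothesis is that $\widehat h_\ell^{(i)}\in\mathcal A$ for $\ell<k$. Since $R_j^{(i)}(\theta)$ are trigonometric polynomials, they have Laurent representations in $\mathbb C[z,z^{-1}]\subset\mathcal A$. The source $\Gamma_k^{(i)}$ in \eqref{eq:source_term} is built from the $R_j^{(i)}$, the $h_\ell^{(i)}$ ($\ell<k$) and the constants $g_m^{(i)}$ ($m<k$) by finitely many sums and products. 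Because $\mathcal A$ is a ring and substitution $z=e^{\mathrm i\theta}$ is a ring homomorphism, $\Gamma_k^{(i)}$ admits the representation $\widetilde P_k^{(i)}\in\mathcal A$. This justifies applying $\mathcal M$ and $\mathcal I$.

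\textbf{Determining $g_k^{(i)}$.} Integrate \eqref{eq:homological_theta} over $[0,\pi]$:
\[
h_k^{(i)}(\pi)-h_k^{(i)}(0)=\int_0^\pi\Gamma_k^{(i)}(\theta)\,d\theta-\pi g_k^{(i)}.
\]
The boundary conditions from Lemma~\ref{lemma:normal_form} make the left side vanish. By \eqref{eq:algebraic_representations} and Lemma~\ref{lemma:algebraic_equivalence},
\[
\frac1\pi\int_0^\pi\Gamma_k^{(i)}\,d\theta=\mathcal M[\widetilde P_k^{(i)}],
\]
which gives \eqref{eq:thm_gk}. This also shows that $g_k^{(i)}$ is uniquely determined by the lower-order data, so it is not a free choice.

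\textbf{Determining $\widehat h_k^{(i)}$.} Define $\widehat h_k^{(i)}:=\mathcal I[\widetilde P_k^{(i)}-g_k^{(i)}]$. By Lemma~\ref{lemma:integration_operator}, $\mathcal D\widehat h_k^{(i)}=\widetilde P_k^{(i)}-g_k^{(i)}$, so \eqref{eq:algebraic_homological_equation} holds. Then \eqref{eq:total_derivative_identity} shows that $\theta\mapsto\widehat h_k^{(i)}(\theta,e^{\mathrm i\theta})$ solves \eqref{eq:homological_theta}. The same lemma gives $\widehat h_k^{(i)}(0,1)=0$, which is $h_k^{(i)}(0)=0$. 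For the other endpoint,
\[
\widehat h_k^{(i)}(\pi,-1)=\pi\mathcal M[\widetilde P_k^{(i)}-g_k^{(i)}]=\pi\bigl(\mathcal M[\widetilde P_k^{(i)}]-g_k^{(i)}\bigr)=0,
\]
using linearity of $\mathcal M$ and $\mathcal M[1]=\mu_{0,0}=1$. Hence $h_k^{(i)}(\pi)=0$.

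Uniqueness follows because two solutions of the first-order ODE with the same value at $\theta=0$ coincide. So this is the transformation function of Lemma~\ref{lemma:normal_form}, and the induction proceeds to $k+1$.

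\textbf{Main obstacle.} The delicate point is bookkeeping rather than analysis. One must check that the first two identities of Lemma~\ref{lemma:integration_operator}, taken there as following ``directly'' from Definition~\ref{def:operator_I}, really hold on each basis element $\theta^pz^j$ with $j\ne0$. For this I would verify that $\mathcal D$ applied to the finite sum telescopes: the $\partial_\theta$ term of index $q$ cancels the $\mathrm i z\partial_z$ term of index $q+1$, leaving exactly $\theta^pz^j$. The constant $-(-1)^pp!/(j\mathrm i)^{p+1}$ is the value that makes the expression vanish at $(0,1)$.

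A subtlety is that $\mathcal A$ representations are not unique, since $z=e^{\mathrm i\theta}$ ties $z$ to $\theta$ only after substitution. However, all statements are made after evaluation along $z=e^{\mathrm i\theta}$, so any representative yields the same $g_k^{(i)}$ and $h_k^{(i)}$.
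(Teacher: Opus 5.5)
Your proof is correct and is essentially the paper's argument. You obtain $g_k^{(i)}$ by integrating \eqref{eq:homological_theta} over $[0,\pi]$ and applying Lemma~\ref{lemma:algebraic_equivalence}, and $\widehat h_k^{(i)}$ from Lemma~\ref{lemma:integration_operator} normalized at $(0,1)$. The only difference is that you check $h_k^{(i)}(\pi)=0$ directly and use uniqueness for the scalar ODE, whereas the paper writes the general solution of \eqref{eq:algebraic_homological_equation} as $\mathcal I[\widetilde P_k^{(i)}-g_k^{(i)}]+C_k^{(i)}$ and sets $C_k^{(i)}=0$.

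One small correction: representations in $\mathcal A$ are in fact unique, because the functions $\theta^pe^{\mathrm{i}j\theta}$ are linearly independent. Your closing caveat is therefore unnecessary, although the conclusion you draw from it is still right.
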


\begin{proof}
Integrating \eqref{eq:homological_theta} over $[0,\pi]$ gives
\begin{equation}\label{eq:integrated_homological_equation}
    h_k^{(i)}(\pi)-h_k^{(i)}(0)=\int_0^\pi\Gamma_k^{(i)}(\theta)\,d\theta-\pi g_k^{(i)}.
\end{equation}
By $h_k^{(i)}(0)=h_k^{(i)}(\pi)=0$, we obtain
\begin{equation}\label{eq:gk_integral_average}
    g_k^{(i)}=\frac{1}{\pi}\int_0^\pi\Gamma_k^{(i)}(\theta)\,d\theta.
\end{equation}
By Lemma~\ref{lemma:algebraic_equivalence},
\begin{equation}
    g_k^{(i)}=\frac{1}{\pi}\int_0^\pi \widetilde{P}_k^{(i)}\left(\theta,e^{\mathrm{i}\theta}\right)\,d\theta
    =\mathcal{M}\left[\widetilde{P}_k^{(i)}\right],
\end{equation}
which proves \eqref{eq:thm_gk}.

By \eqref{eq:algebraic_homological_equation}, we obtain
\begin{equation}\label{eq:general_homological_solution}
    \widehat{h}_k^{(i)}(\theta,z)
    =\mathcal{I}\left[\widetilde{P}_k^{(i)}(\theta,z)-g_k^{(i)}\right]+C_k^{(i)}.
\end{equation}
Evaluating \eqref{eq:general_homological_solution} at $(\theta,z)=(0,1)$ yields
\begin{equation}
    0=h_k^{(i)}(0)=\widehat{h}_k^{(i)}(0,1)
    =\mathcal{I}\left[\widetilde{P}_k^{(i)}(\theta,z)-g_k^{(i)}\right](0,1)+C_k^{(i)}
    =C_k^{(i)}.
\end{equation}
Therefore,
\begin{equation}
    \widehat{h}_k^{(i)}(\theta,z)
    =\mathcal{I}\left[
    \widetilde{P}_k^{(i)}(\theta,z)-g_k^{(i)}
    \right],
\end{equation}
which proves \eqref{eq:thm_hk}.
\end{proof} 

\subsection{Recursive Algorithms}

We now present a recursive algorithm for calculating Lyapunov constants of planar switching systems.

\begin{breakablealgorithm}
    \caption{Lyapunov Constants for the Planar Switching System \eqref{sys:2.5}}
    \label{alg:algebraic_nf}
    \begin{algorithmic}[1]
        \REQUIRE The two subsystems $S^{(1)}$ and $S^{(2)}$ in the form
        \[
        \dot{x}=-y+P^{(i)}(x,y),\qquad
        \dot{y}=x+Q^{(i)}(x,y),\qquad i=1,2,
        \]
        and the maximum order $n$.
        \ENSURE Lyapunov constants
        $V=\{V_2,\ldots,V_n\}$.

        \STATE $x\leftarrow\frac{r}{2}(z+z^{-1})$,
        $y\leftarrow\frac{r}{2\mathrm{i}}(z-z^{-1})$,
        where $z=e^{\mathrm{i}\theta}$

        \FOR{$i\in\{1,2\}$}
            \STATE
            $\displaystyle
            R^{(i)}(r,z)
            \leftarrow
            r\frac{xP^{(i)}(x,y)+yQ^{(i)}(x,y)}
            {r^2+xQ^{(i)}(x,y)-yP^{(i)}(x,y)}$

            \STATE
            $\displaystyle
            R^{(i)}(r,z)
            \leftarrow
            \operatorname{Trunc}_{r}^{n}
            R^{(i)}(r,z)$

            \STATE
            $\widehat{\mathcal{H}}_{<2}^{(i)}(\theta,z,\rho)\leftarrow0$

            \STATE
            $G_{<2}^{(i)}(\rho)\leftarrow0$
        \ENDFOR

        \STATE $\mathcal{L}\leftarrow[\,]$

        \FOR{$k=2$ \TO $n$}
            \FOR{$i\in\{1,2\}$}
                \STATE
                $\displaystyle
                F_k^{(i)}(\theta,z,\rho)
                \leftarrow
                \operatorname{Trunc}_{\rho}^{k}
                \left[
                R^{(i)}
                \left(
                \rho+\widehat{\mathcal{H}}_{<k}^{(i)}(\theta,z,\rho),z
                \right)
                -
                \frac{\partial
                \widehat{\mathcal{H}}_{<k}^{(i)}(\theta,z,\rho)}
                {\partial\rho}
                G_{<k}^{(i)}(\rho)
                \right]$

                \STATE
                $\displaystyle
                \widetilde{P}_k^{(i)}(\theta,z)
                \leftarrow
                [\rho^k]F_k^{(i)}(\theta,z,\rho)$

                \STATE
                $\displaystyle
                g_k^{(i)}
                \leftarrow
                \mathcal{M}
                \left[\widetilde{P}_k^{(i)}(\theta,z)\right]$

                \IF{$k<n$}
                    \STATE
                    $\displaystyle
                    Q_k^{(i)}(\theta,z)
                    \leftarrow
                    \widetilde{P}_k^{(i)}(\theta,z)-g_k^{(i)}$

                    \STATE
                    $\displaystyle
                    \widehat{h}_k^{(i)}(\theta,z)
                    \leftarrow
                    \mathcal{I}
                    \left[Q_k^{(i)}(\theta,z)\right]$

                    \STATE
                    $\displaystyle
                    \widehat{\mathcal{H}}_{<k+1}^{(i)}(\theta,z,\rho)
                    \leftarrow
                    \widehat{\mathcal{H}}_{<k}^{(i)}(\theta,z,\rho)
                    +
                    \widehat{h}_k^{(i)}(\theta,z)\rho^k$

                    \STATE
                    $\displaystyle
                    G_{<k+1}^{(i)}(\rho)
                    \leftarrow
                    G_{<k}^{(i)}(\rho)+g_k^{(i)}\rho^k$
                \ENDIF
            \ENDFOR

            \STATE
            $\displaystyle
            V_k
            \leftarrow
            \pi\left(g_k^{(1)}-g_k^{(2)}\right)$

            \STATE
            \STATE $V \leftarrow V \cup \{V_k\}$
        \ENDFOR

        \RETURN $V$
    \end{algorithmic}
\end{breakablealgorithm}

\section{Examples and Experiments}\label{Examples}
In this section, we present two examples to illustrate the application and performance of the proposed algorithm. 

\subsection{Example 1}\label{example:1}
Consider a generalized switching quartic Liénard system
\begin{equation} \label{ex:1}
    \begin{pmatrix}
        \dot{x} \\
        \dot{y}
    \end{pmatrix}=
    \begin{cases}
    \begin{pmatrix}
        \delta x+y - (a_{21}x^2 + a_{31}x^3+a_{41}x^4) \\
        -x - (b_{21}x^2 + b_{31}x^3 + b_{41}x^4)
    \end{pmatrix}, & \text{if } y \ge 0, \\
    \begin{pmatrix}
        \delta x + y - (a_{22}x^2 + a_{32}x^3 + a_{42}x^4) \\
        -x- (b_{22}x^2 + b_{32}x^3 + b_{42}x^4)
    \end{pmatrix}, & \text{if } y < 0.
    \end{cases}
\end{equation}

\begin{theorem}\label{thm:center_conditions}
The system \eqref{ex:1} has a center at the orign if and only if the following conditions holds:
\begin{align}
    C_{\mathrm{I}}:\quad &\delta=0,\quad a_{21}+a_{22}=a_{31}+a_{32}=a_{41}+a_{42}=0,
    \notag\\
    &b_{21}-b_{22}=b_{31}-b_{32}=b_{41}-b_{42}=0;
    \label{eq:center_condition_I}\\
    C_{\mathrm{II}}:\quad
    &\delta=b_{21}=b_{22}=a_{31}=a_{32}=b_{41}=b_{42}=0.
    \label{eq:center_condition_II}
\end{align}
\end{theorem}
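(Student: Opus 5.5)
The plan is to prove sufficiency by the symmetry criterion of Lemma~\ref{lem:symmetry_center}, and necessity by computing enough Lyapunov constants with Algorithm~\ref{alg:algebraic_nf} and decomposing the variety they define.

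\textbf{Sufficiency.} For $C_{\mathrm{I}}$ we have $\delta=0$. Writing $A_i(x)=a_{2i}x^2+a_{3i}x^3+a_{4i}x^4$ and $B_i(x)=b_{2i}x^2+b_{3i}x^3+b_{4i}x^4$, the conditions read $A_1=-A_2$ and $B_1=B_2$. The nonlinear parts are $X^{(i)}=-A_i(x)$ and $Y^{(i)}=-B_i(x)$, which do not depend on $y$. Hence $X^{(1)}(x,y)=-X^{(2)}(x,-y)$ and $Y^{(1)}(x,y)=Y^{(2)}(x,-y)$, which is the $x$-axis symmetry of Lemma~\ref{lem:symmetry_center}. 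For $C_{\mathrm{II}}$ we again have $\delta=0$, and in each half-plane $A_i(x)=a_{2i}x^2+a_{4i}x^4$ is even while $B_i(x)=b_{3i}x^3$ is odd. Each subsystem is therefore invariant under $(x,y,t)\mapsto(-x,y,-t)$, and this involution preserves the switching line $y=0$, so the $y$-axis case of Lemma~\ref{lem:symmetry_center} applies.

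One point needs care. The linear part of \eqref{ex:1} at $\delta=0$ is $\begin{pmatrix}0&1\\-1&0\end{pmatrix}=-J$, so the system must first be brought to the form \eqref{sys:2.5}. I would use the reflection $y\mapsto -y$, which fixes the switching line $y=0$ but exchanges the two half-planes. I will check that both symmetry conditions survive this reflection, possibly with the roles of $X^{(1)},X^{(2)}$ exchanged. Should the normalisation be awkward, a direct reversibility argument works equally well: a reversible involution that maps the system to itself with time reversed and fixes a line through the origin transversal to the flow forces every orbit near the origin to be closed.

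\textbf{Necessity, the linear coefficient $\delta$.} I would first show that $\delta=0$ is forced. For $|\delta|<2$ the linear parts have eigenvalues $\delta/2\pm \mathrm{i}\sqrt{1-\delta^2/4}$. Both half-return maps are then linear to first order, with multiplier $e^{\pi\delta/(2\omega)}$ where $\omega=\sqrt{1-\delta^2/4}$. After folding the lower subsystem by $(x,y,t)\mapsto(x,-y,-t)$, the first-order coefficient of the successive function is proportional to $e^{\pi\delta/(2\omega)}-e^{-\pi\delta/(2\omega)}$, which is nonzero unless $\delta=0$. For $|\delta|\ge 2$ the origin is not of focus–center type, so there is nothing to check.

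\textbf{Necessity, higher Lyapunov constants.} With $\delta=0$, after the normalisation to \eqref{sys:2.5}, I would run Algorithm~\ref{alg:algebraic_nf} to obtain $V_2,\dots,V_n$ as polynomials in the twelve parameters $a_{ji},b_{ji}$, going to roughly $n=11$ or $12$. I expect the low-order constants to be simple:
\begin{itemize}
\item $V_2$ should be a multiple of $a_{21}+a_{22}$;
\item $V_3$ should involve $a_{31}+a_{32}$ together with products such as $a_{2i}b_{2i}$;
\item the higher constants should be successively longer.
\end{itemize}
I would then compute the radical of the ideal $\langle V_2,\dots,V_n\rangle$, using a Gröbner basis or triangular decomposition, or solve it stepwise. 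The stepwise route is to use each new $V_k$ to eliminate one parameter, factor the remaining constants, and split into cases. These cases should be $a_{21}+a_{22}\ne0$ forcing the $b_{ji}$ into the $C_{\mathrm{II}}$ pattern, versus $b$-differences vanishing and leading to $C_{\mathrm{I}}$. The goal is to show that the variety is exactly $V(C_{\mathrm{I}})\cup V(C_{\mathrm{II}})$. Whenever $V_2=\dots=V_{k-1}=0$ but the parameters do not lie on either component, the first nonzero $V_k$ gives a focus, which proves necessity.

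\textbf{Main obstacle.} The decomposition step is where I expect the real difficulty. The high-order $V_k$ are large, and one must confirm that no extra component, for example a mixed or non-symmetric one, survives. I would first try to solve successively in the order $a_{22}$, then $a_{32}$, then the $b$'s, factoring at each stage. Where a branch stays inconclusive, I would compute one more Lyapunov constant and check, via resultants or a Gröbner basis of the saturated ideal, that every remaining branch lies in $C_{\mathrm{I}}\cup C_{\mathrm{II}}$.
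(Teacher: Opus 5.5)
\textbf{The necessity direction fails, because the ``only if'' claim is false.}

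Your sufficiency argument is correct and is the paper's. $C_{\mathrm{I}}$ is reversibility with respect to the switching line, and $C_{\mathrm{II}}$ is reversibility under $(x,y,t)\mapsto(-x,y,-t)$ in each zone. Both are covered by Lemma~\ref{lem:symmetry_center} once the linear part $-J$ is normalised; time reversal alone does this without swapping the half-planes. Forcing $\delta=0$ from the linear half-maps is also fine.

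The necessity step, however, cannot be completed for any $n$. Take $\delta=0$ and a common restoring force $b_{2i}=\beta$, $b_{3i}=\gamma$, $b_{4i}=0$. Put $G(x)=x^2/2+\beta x^3/3+\gamma x^4/4$ and $a_{2i}=c_i/2$, $a_{3i}=c_i\beta/3$, $a_{4i}=c_i\gamma/4$, i.e.\ $F_i=c_iG$.
\begin{itemize}
\item Along orbits in zone $i$, $u=G(x)$ satisfies $du/dy=c_iu-y$ on both sides of the $y$-axis.
\item $y$ is strictly monotone on each side, since $\dot y=-G'(x)$.
\item So the half-orbit through $(0,y_0)$ meets $y=0$ at two points with equal $G$-values, in both zones, and every orbit closes.
\end{itemize}
For $\beta\neq0$ and $c_1+c_2\neq0$ this is a center satisfying neither $C_{\mathrm{I}}$ (since $a_{21}+a_{22}=(c_1+c_2)/2$) nor $C_{\mathrm{II}}$ (since $b_{21}=\beta$).

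The simplest instance is even smooth: $\dot x=y-x^2-\tfrac23x^3$, $\dot y=-x-x^2$ in both half-planes, i.e.\ $a_{2i}=1$, $a_{3i}=2/3$, $b_{2i}=1$, all else $0$. It has the first integral $(G-y/2-1/4)e^{-2y}$ with $G=x^2/2+x^3/3$, which has a nondegenerate minimum at the origin. One checks directly that it annihilates the paper's $V_2,\dots,V_6$. Note also that $V_2=\tfrac23(b_{21}-b_{22})$, not a multiple of $a_{21}+a_{22}$ as you predicted.

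So the non-symmetric extra component you worried about in your last paragraph really exists. It is the Li\'enard composition mechanism $F_i=\Phi_i(G)$ with a common $G$. It lies in $V(\langle V_2,\dots,V_n\rangle)$ for every $n$, so no further Lyapunov constant, resultant or saturation will remove it.

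The paper does not prove necessity either. Its case analysis leaves Case~i.2 with $a_{31}\neq0$ unresolved. Case~ii.2 ($b_{22}\neq0$, $a_{21}+a_{22}\neq0$) contains this family, but the paper only follows it along one branch to exhibit a point with $V_{11}\neq0$.

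What your proposal does establish is the ``if'' direction. A correct ``only if'' statement must at least add this third component. Only after the list of components is complete would your decomposition strategy be the right way to prove it.
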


\begin{theorem}\label{thm:ten_limit_cycles}
    For suitable parameter values and arbitrarily small perturbations, the system \eqref{ex:1} has exactly ten small-amplitude limit cycles bifurcating from the origin. 
\end{theorem}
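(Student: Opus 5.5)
Before computing anything, I will bring system \eqref{ex:1} into the form \eqref{sys:2.5}. The linear part of each subsystem is $\begin{pmatrix}\delta & 1\\ -1 & 0\end{pmatrix}$. When $\delta=0$ this is already a rotation, only with the opposite orientation, so the scaling $t\mapsto -t$ (or $y\mapsto -y$) together with the folding $(x,y,t)\mapsto(x,-y,-t)$ for the lower half-plane produces two subsystems $S^{(1)},S^{(2)}$ of type \eqref{poly} on $y\ge0$. I keep $\delta$ as the trace parameter. It contributes the first Lyapunov constant $V_1=e^{\delta\pi/\omega}-1$, up to a positive factor, which is nonzero when $\delta\neq0$. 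This supplies the one extra limit cycle by the usual Hopf argument.

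The main computation is to run Algorithm~\ref{alg:algebraic_nf} with $\delta=0$ and obtain $V_2,\dots,V_{11}$ (or as far as needed) as polynomials in the twelve coefficients $a_{j1},a_{j2},b_{j1},b_{j2}$. I will first remove superfluous parameters. One can normalize one coefficient by the scaling $x\mapsto \lambda x$, for example fixing $b_{21}$ or $a_{21}$ to be nonzero. I will then solve $V_2=0,V_3=0,\dots$ successively and choose the branch that stays away from the center conditions $C_{\mathrm I}$ and $C_{\mathrm{II}}$ of Theorem~\ref{thm:center_conditions}. At each step I will solve the lowest constant linearly in a fresh parameter where this is possible, for instance $V_2$ in $a_{22}$ and $V_3$ in $b_{22}$, in the same manner as the proof of the center conditions. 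I will keep a record of which parameters $\varepsilon_1,\dots,\varepsilon_9$ have been eliminated. The target is a point $\mathbf p^*$ in the reduced parameter space where $V_2=\dots=V_{10}=0$ and $V_{11}\neq0$, so that the origin is a weak focus of order ten relative to the normal form. The last few equations will be nonlinear, and I expect them to reduce to a resultant in one remaining variable. In that case I will pick a real simple root, isolate it rigorously by interval arithmetic or a Sturm sequence, and check numerically with high precision that $V_{11}(\mathbf p^*)\neq0$.

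Next comes transversality, which is what Lemma~\ref{limit cycle} requires. Taking $c_0=V_1$ (controlled by $\delta$) and $c_j=V_{j+1}$, I must verify that the Jacobian $\partial(V_2,\dots,V_{10})/\partial(\varepsilon_1,\dots,\varepsilon_9)$ is nonsingular at $\mathbf p^*$. Since $\partial V_1/\partial\delta\neq0$ and $V_j$ for $j\ge2$ does not depend on $\delta$ at first order in the triangular setting, the full $10\times10$ determinant factors. Lemma~\ref{limit cycle} with $k=10$ then gives exactly ten small-amplitude limit cycles for suitable small perturbations.

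I expect the main obstacle to be the high-order symbolic computation, namely $V_{9}$ through $V_{11}$ with many parameters, together with certifying that the final nonlinear system has a real solution with a nonzero Jacobian. The expression swell is exactly what the Laurent-algebra operators $\mathcal M$ and $\mathcal I$ are meant to control. My first attempt will be to substitute the successive solutions back into each new constant immediately and reduce modulo the previously obtained relations. Only after that will I form the resultant, and I will check the Jacobian with exact arithmetic on algebraic numbers.
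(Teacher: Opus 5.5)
Your plan is essentially the paper's proof. Both compute the constants with Algorithm~\ref{alg:algebraic_nf} at $\delta=0$ and solve the low-order ones linearly on a branch avoiding $C_{\mathrm I}$ and $C_{\mathrm{II}}$ (the paper takes $b_{22}\neq0$, $a_{21}+a_{22}\neq0$). Both then finish $V_7,\dots,V_{10}$ by elimination and a resultant (the paper fixes $b_{22}=1$, $a_{31}=b_{31}=0$), check $V_{11}\neq0$ and a nonzero Jacobian, and apply Lemma~\ref{limit cycle} with $k=10$. For the Jacobian, the paper uses the triangular structure in $\delta,b_{21},a_{32},b_{42},a_{42},a_{41}$ and evaluates only the $4\times4$ block in $b_{41},b_{32},a_{21},a_{22}$.

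Three small slips need fixing. First, $V_2=\frac23(b_{21}-b_{22})$ does not involve $a_{22}$, so it must be solved in $b_{21}$. Second, the normalizing scaling must be $(x,y)\mapsto(\lambda x,\lambda y)$, since $x\mapsto\lambda x$ alone destroys the linear part. Third, the $10\times10$ determinant factors because $V_1$ depends only on $\delta$, so its row is $(\partial_\delta V_1,0,\dots,0)$. It does not factor because $V_j$ for $j\ge2$ is independent of $\delta$, which you have no reason to assert.
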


\begin{proof}
We apply the coordinate transformation $y \to -y$ and time reversal $t \to -t$ to the lower system ($y<0$). Then \eqref{ex:1} is transformed into
\begin{align}
    \left\{
    \begin{aligned}
        \dot{x} &= -\delta x + y + (a_{22}x^2 + a_{32}x^3 + a_{42}x^4), \\
        \dot{y} &= -x- (b_{22}x^2 + b_{32}x^3 + b_{42}x^4).
    \end{aligned}
    \right.
\end{align}
Subsequently, the first ten Lyapunov constants can be computed using Algorithm \ref{alg:algebraic_nf}. Setting $\delta=0$ yields $V_1=0$. Then $V_2=\frac{2}{3}(b_{21}-b_{22})=0$ leads to $b_{21}=b_{22}$. Then
\begin{equation*}
    V_3=-\frac{(2a_{21}b_{22} + 2a_{22}b_{22} - 3a_{31} - 3a_{32})\pi}{8},
\end{equation*}
which leads to 
\begin{equation*}
    a_{32} = \frac{2a_{21}b_{22}}{3} + \frac{2a_{22}b_{22}}{3} - a_{31}.
\end{equation*}
Then
\begin{equation*}
    V_4=-\frac{28}{45}a_{21}^2 b_{22} - \frac{2}{3}b_{22}b_{31} + \frac{2}{3}b_{22}b_{32} + \frac{14}{15}a_{21}a_{31} - \frac{28}{45}a_{21}a_{22}b_{22} + \frac{14}{15}a_{22}a_{31} + \frac{2}{5}b_{41} - \frac{2}{5}b_{42},
\end{equation*}
and setting $V_4=0$ provides
\begin{equation*}
    b_{42} = -\frac{14}{9}a_{21}^2 b_{22} - \frac{5}{3}b_{22}b_{31} + \frac{5}{3}b_{22}b_{32} + \frac{7}{3}a_{21}a_{31} - \frac{14}{9}a_{21}a_{22}b_{22} + \frac{7}{3}a_{22}a_{31} + b_{41}.
\end{equation*}
Similarly, we obtain
\begin{equation*}
\begin{aligned}
    V_5 &= -\frac{\pi}{576} \bigg( 104a_{21}^3b_{22} - 112a_{21}^2a_{22}b_{22} - 216a_{21}a_{22}^2b_{22} - 156a_{21}^2a_{31} + 168a_{21}a_{22}a_{31} \\
    &\quad - 210a_{21}b_{22}b_{31} + 90a_{21}b_{22}b_{32} + 324a_{22}^2a_{31} - 120a_{22}b_{22}b_{31} + 72a_{21}b_{41} + 72a_{22}b_{41} \\
    &\quad + 135a_{31}b_{31} - 135a_{31}b_{32} + 240a_{41}b_{22} + 240a_{42}b_{22} \bigg)
\end{aligned} 
\end{equation*}

\textbf{Case i}: $b_{22}=0$. We have
\begin{align*}
    V_5 &= \frac{\pi}{192} \big( 52a_{21}^2a_{31} - 56a_{21}a_{22}a_{31} - 108a_{22}^2a_{31} - 24a_{21}b_{41} \\
    & - 24a_{22}b_{41} - 45a_{31}b_{31} + 45a_{31}b_{32} \big).
\end{align*}

\textbf{Case i.1}: $a_{21}+a_{22}=0$. We have
\begin{equation*}
    V_5 = -\frac{15 a_{31} (b_{31} - b_{32}) \pi}{64}.
\end{equation*}
Setting $V_5=0$ requires $b_{31} = b_{32}$, which yields
\begin{equation*}
    V_6 = \frac{58}{525}a_{31}(a_{41} + a_{42}).
\end{equation*}
Setting $V_6=0$, we obtain $a_{41} = -a_{42}$.
In this case, the complete set of conditions derived thus far can be simplified to
\begin{equation}\label{condition1}
    a_{21} = -a_{22}, \quad a_{31} = -a_{32}, \quad a_{41} = -a_{42}, \quad b_{21} = b_{22}, \quad b_{31} = b_{32}, \quad b_{41} = b_{42}.
\end{equation}

By \eqref{condition1}, the switching system \eqref{ex:1} can be rewritten as
\begin{equation}\label{case i.1}
    \begin{pmatrix}
        \dot{x} \\
        \dot{y}
    \end{pmatrix}=
    \begin{cases}
    \begin{pmatrix}
        y - (a_{21}x^2 + a_{31}x^3+a_{41}x^4) \\
        -x - (b_{21}x^2 + b_{31}x^3 + b_{41}x^4)
    \end{pmatrix}, & \text{if } y \ge 0, \\[3ex]
    \begin{pmatrix}
        y + (a_{21}x^2 + a_{31}x^3 + a_{41}x^4) \\
        -x - (b_{21}x^2 + b_{31}x^3 + b_{41}x^4)
    \end{pmatrix}, & \text{if } y < 0.
    \end{cases}
\end{equation}
By Lemma \ref{lem:symmetry_center}, it follows that the origin of system \eqref{ex:1} is a center.

\textbf{Case i.2}: $a_{21}+a_{22} \neq 0$. From $V_5=0$, we have
\begin{equation}\label{eq:b_{41}}
    b_{41}=\frac{a_{31} (52a_{21}^2 - 56a_{21}a_{22} - 108a_{22}^2 - 45b_{31} + 45b_{32})}{24(a_{21} + a_{22})}.
\end{equation}
By \eqref{eq:b_{41}}, we obtain
\begin{equation*}
\begin{aligned}
    V_6 &= -\frac{a_{31}}{18900(a_{21} + a_{22})} \bigg( 4336a_{21}^4 - 37784a_{21}^3a_{22} - 84240a_{21}^2a_{22}^2 - 37784a_{21}a_{22}^3 + 4336a_{22}^4 \\
    &\quad + 13014a_{21}^2b_{31} - 16470a_{21}^2b_{32} - 3456a_{21}a_{22}b_{31} - 3456a_{21}a_{22}b_{32} - 16470a_{22}^2b_{31} \\
    &\quad + 13014a_{22}^2b_{32} - 2088a_{21}a_{41} - 2088a_{21}a_{42} - 2088a_{22}a_{41} - 2088a_{22}a_{42} \\
    &\quad - 14175b_{31}^2 + 28350b_{31}b_{32} - 14175b_{32}^2 \bigg).
\end{aligned}
\end{equation*}

\textbf{Case i.2.1}: $a_{31}=0$. In this case, the conditions derived thus far can be simplified to
\begin{equation}
    b_{21}=b_{22}=a_{31}=a_{32}=b_{41}=b_{42}=0.
\end{equation}
Now \eqref{ex:1} can be rewritten as
\begin{equation}
    \begin{pmatrix}
        \dot{x} \\
        \dot{y}
    \end{pmatrix}=
    \begin{cases}
    \begin{pmatrix}
        y - (a_{21}x^2 + a_{41}x^4) \\
        -x - b_{31}x^3
    \end{pmatrix}, & \text{if } y \ge 0, \\
    \begin{pmatrix}
        y - (a_{22}x^2 + a_{42}x^4) \\
        -x - b_{32}x^3
    \end{pmatrix}, & \text{if } y < 0.
    \end{cases}
\end{equation}

To prove that the origin is a center, we examine the symmetry of the vector fields. Applying $(x, y, t) \mapsto (-x, y, -t)$ to both the upper ($i=1$) and lower ($i=2$) subsystems yields
\begin{equation}
    \begin{cases}
        \frac{dx}{dt} = y - (a_{2i}x^2 + a_{41}x^4), \\
        \frac{dy}{dt} = -x - b_{3i}x^3.
    \end{cases}
\end{equation}

By Lemma \ref{lem:symmetry_center}, it follows that the origin of system \eqref{ex:1} is a center.

\textbf{Case ii}: $b_{22} \neq 0$. From $V_5=0$, we have 
\begin{equation*}
\begin{aligned}
    a_{42} &= -\frac{1}{240b_{22}} \bigg( 104a_{21}^3b_{22} - 112a_{21}^2a_{22}b_{22} - 216a_{21}a_{22}^2b_{22} - 156a_{21}^2a_{31} + 168a_{21}a_{22}a_{31} \\
    &\quad - 210a_{21}b_{22}b_{31} + 90a_{21}b_{22}b_{32} + 324a_{22}^2a_{31} - 120a_{22}b_{22}b_{31} \\
    &\quad + 72a_{21}b_{41} + 72a_{22}b_{41} + 135a_{31}b_{31} - 135a_{31}b_{32} + 240a_{41}b_{22} \bigg).
\end{aligned}
\end{equation*}

\textbf{Case ii.1}: $a_{21}+a_{22}=0$. Then
\begin{align*}
    V_6 &= -\frac{b_{31} - b_{32}}{12600b_{22}} \big( 1844a_{22}^2b_{22}^2 - 5600b_{22}^4 + 3288a_{22}a_{31}b_{22} \\
    &- 8400b_{22}^2b_{31} + 783a_{31}^2 + 5040b_{22}b_{41} \big).
\end{align*}
Requiring $V_6 = 0$ directly yields $b_{31} = b_{32}$. Under these parameter conditions derived thus far, the system takes the form
\begin{equation}
    \begin{pmatrix}
        \dot{x} \\
        \dot{y}
    \end{pmatrix}=
    \begin{cases}
    \begin{pmatrix}
        y - (a_{21}x^2 + a_{31}x^3+a_{41}x^4) \\
        -x - (b_{21}x^2 + b_{31}x^3 + b_{41}x^4)
    \end{pmatrix}, & \text{if } y \ge 0, \\[3ex]
    \begin{pmatrix}
        y + (a_{21}x^2 + a_{31}x^3 + a_{41}x^4) \\
        -x - (b_{21}x^2 + b_{31}x^3 + b_{41}x^4)
    \end{pmatrix}, & \text{if } y < 0,
    \end{cases}
\end{equation}
which coincides with \eqref{case i.1} considered in \textbf{Case i.1}. Consequently, the origin is guaranteed to be a center.

\textbf{Case ii.2}: $a_{21}+a_{22} \neq 0$. It follows that
\begin{equation*}
\begin{aligned}
    V_6 &:= -\frac{1}{567000b_{22}} \Big( 300784a_{21}^4b_{22}^2 + 425088a_{21}^3a_{22}b_{22}^2 + 37584a_{21}^2a_{22}^2b_{22}^2 - 336000a_{21}^2b_{22}^4 \\
    &\quad - 86720a_{21}a_{22}^3b_{22}^2 - 336000a_{21}a_{22}b_{22}^4 - 424032a_{21}^3a_{31}b_{22} - 666864a_{21}^2a_{22}a_{31}b_{22} \\
    &\quad - 715140a_{21}^2b_{22}^2b_{31} + 337140a_{21}^2b_{22}^2b_{32} - 112752a_{21}a_{22}^2a_{31}b_{22} - 798120a_{21}a_{22}b_{22}^2b_{31} \\
    &\quad + 420120a_{21}a_{22}b_{22}^2b_{32} + 504000a_{21}a_{31}b_{22}^3 + 130080a_{22}^3a_{31}b_{22} + 504000a_{22}a_{31}b_{22}^3 \\
    &\quad - 252000b_{22}^4b_{31} + 252000b_{22}^4b_{32} - 40716a_{21}^2a_{31}^2 + 268272a_{21}^2b_{22}b_{41} + 43848a_{21}a_{22}a_{31}^2 \\
    &\quad + 268272a_{21}a_{22}b_{22}b_{41} + 347220a_{21}a_{31}b_{22}b_{31} - 482220a_{21}a_{31}b_{22}b_{32} + 936000a_{21}a_{41}b_{22}^2 \\
    &\quad + 84564a_{22}^2a_{31}^2 + 495180a_{22}a_{31}b_{22}b_{31} - 630180a_{22}a_{31}b_{22}b_{32} + 936000a_{22}a_{41}b_{22}^2 \\
    &\quad - 378000b_{22}^2b_{31}^2 + 378000b_{22}^2b_{31}b_{32} + 18792a_{21}a_{31}b_{41} + 18792a_{22}a_{31}b_{41} \\
    &\quad + 35235a_{31}^2b_{31} - 35235a_{31}^2b_{32} + 226800b_{22}b_{31}b_{41} - 226800b_{22}b_{32}b_{41} \Big).
\end{aligned}
\end{equation*}

To ensure  $V_6 = 0$, we obtain
\begin{equation*}
\begin{aligned}
    a_{41} &= -\frac{1}{936000b_{22}^2(a_{21} + a_{22})} \Big( 300784a_{21}^4b_{22}^2 + 425088a_{21}^3a_{22}b_{22}^2 + 37584a_{21}^2a_{22}^2b_{22}^2 \\
    &\quad - 336000a_{21}^2b_{22}^4 - 86720a_{21}a_{22}^3b_{22}^2 - 336000a_{21}a_{22}b_{22}^4 - 424032a_{21}^3a_{31}b_{22} \\
    &\quad - 666864a_{21}^2a_{22}a_{31}b_{22} - 715140a_{21}^2b_{22}^2b_{31} + 337140a_{21}^2b_{22}^2b_{32} - 112752a_{21}a_{22}^2a_{31}b_{22} \\
    &\quad - 798120a_{21}a_{22}b_{22}^2b_{31} + 420120a_{21}a_{22}b_{22}^2b_{32} + 504000a_{21}a_{31}b_{22}^3 + 130080a_{22}^3a_{31}b_{22} \\
    &\quad + 504000a_{22}a_{31}b_{22}^3 - 252000b_{22}^4b_{31} + 252000b_{22}^4b_{32} - 40716a_{21}^2a_{31}^2 \\
    &\quad + 268272a_{21}^2b_{22}b_{41} + 43848a_{21}a_{22}a_{31}^2 + 268272a_{21}a_{22}b_{22}b_{41} + 347220a_{21}a_{31}b_{22}b_{31} \\
    &\quad - 482220a_{21}a_{31}b_{22}b_{32} + 84564a_{22}^2a_{31}^2 + 495180a_{22}a_{31}b_{22}b_{31} - 630180a_{22}a_{31}b_{22}b_{32} \\
    &\quad - 378000b_{22}^2b_{31}^2 + 378000b_{22}^2b_{31}b_{32} + 18792a_{21}a_{31}b_{41} + 18792a_{22}a_{31}b_{41} \\
    &\quad + 35235a_{31}^2b_{31} - 35235a_{31}^2b_{32} + 226800b_{22}b_{31}b_{41} - 226800b_{22}b_{32}b_{41} \Big).
\end{aligned}
\end{equation*}

Insertion of the derived conditions simplifies $V_7$ and $V_8$ into the fractional forms
\begin{equation}
    V_7 = \frac{M_7}{A_1}, \quad V_8 = \frac{M_8}{A_2}.
\end{equation}
The requirement for these constants to vanish dictates that their numerators must be zero, which establishes
\begin{equation}
    M_7 = 0, \quad M_8 = 0.
\end{equation}
As detailed in \ref{appendix_1}, $M_7$ and $M_8$ are quadratic polynomials in $b_{41}$, in the form of
\begin{equation}
\begin{aligned}
    M_7 &= p b_{41}^2 + c_1 b_{41} + c_0 = 0, \\
    M_8 &= q b_{41}^2 + d_1 b_{41} + d_0 = 0,
\end{aligned}
\end{equation}
where the coefficients $p, q$ and $c_i, d_i$ ($i=0,1$) are independent of $b_{41}$. 

Eliminating $b_{41}^2$ from $M_7$ and $M_8$ yields
\begin{equation}
    q M_7 - p M_8 = 0.
\end{equation}
The combination $qM_7-pM_8$ is linear in $b_{41}$. Solving this equation gives the expression for $b_{41}$ listed in \ref{appendix_1}.

Substituting the derived expression for $b_{41}$ into $M_8$ and enforcing the condition $M_8 = 0$ allows us to express $b_{32}$ as
\begin{equation*}
\begin{aligned}
    b_{32} &= \frac{1}{34977600b_{22}} \big(32248125\pi a_{21}b_{22}^2 + 32248125\pi a_{22}b_{22}^2 + 53395904a_{21}^2b_{22} \\
    &\quad - 1932096a_{21}a_{22}b_{22} - 55328000a_{22}^2b_{22} + 2898144a_{21}a_{31} \\
    &\quad + 2898144a_{22}a_{31} + 34977600b_{22}b_{31} \big).
\end{aligned}
\end{equation*}

Further evaluation of the higher-order Lyapunov constants under the established constraints gives
\begin{equation}
    V_9 = \frac{b_{22}(a_{21}+a_{22})N_9}{A_3}, \quad V_{10} = \frac{b_{22}(a_{21}+a_{22})N_{10}}{A_4}.
\end{equation}
$V_9 = V_{10} = 0$ hold if and only if their simplified numerators vanish, which reduces to
\begin{equation}
    N_9 = 0, \quad N_{10} = 0.
\end{equation}

To eliminate $a_{21}$, we compute the resultant of $N_9$ and $N_{10}$ with respect to this variable. This produces a polynomial condition solely in terms of $a_{22}$ as
\begin{equation}
\text{Res}[N_9, N_{10}, a_{21}] = 0.
\end{equation}

To alleviate the computational complexity, we fix the parameters $b_{22} = 1$, $a_{31} = 0$, and $b_{31} = 0$. Numerical evaluation of this resultant equation yields a finite set of roots for $a_{22}$. Among the real roots, we select the root $a_{22}^*$ and determine its high-precision numerical value to be
\begin{equation}
\begin{aligned}
    a_{22}^* &= -1.10537021875498463013092970083802084162514868316355870327048 \\
    &\quad \ 46251431085942888875905928714411556190626455050653412342160 \\
    &\quad \ 6865036271014821217356391432835\cdots.
\end{aligned}
\end{equation}

Substituting $a_{22}^*$ back into the reduced equation $N_9 = 0$ determines the corresponding numerical value for $a_{21}$ as
\begin{equation}
\begin{aligned}
    a_{21}^* &= 4.58271329818940593699661949857618500569500045610176400170278 \\
    &\quad \ 13020411921968051837373202834427635590047173038374801943219 \\
    &\quad \ 4680293633645973224575027897834\cdots.
\end{aligned}
\end{equation}
Revisiting the original parametric constraints, the numerical values for the remaining parameters are evaluated as
\begin{equation}
\begin{split}
    b_{21}^*&=1, \quad a_{31}^*=0, \quad b_{31}^*=0\\
        a_{32}^*&=2.318228719622947537910459865158776109379901181958803532288197 \\
    &\quad \ 784598722401677530764484941334405293294714532514759306737252 \\
    &\quad \ 10171575087434671479090976666\cdots, \\
        b_{32}^* &= 40.478982676093856309103682772064563391437458289104495 \\
    &\quad \ 83034216611904657854591836882213733914253566901088281671086360 \\
    &\quad \ 87766306328166318324424062262785372\cdots, \\
    b_{41}^* &= 39.646257687264953592871622019448492361257253578860506 \\
    &\quad \ 42966012166024241207457852990508622124588381652537951880786901 \\
    &\quad \ 03842378464147971868526364866221924\cdots, \\
    a_{41}^* &= -30.65038246861737697127373707318315115855568542205338 \\
    &\quad \ 02056816728576422001066461877180666562570460729035000472800784 \\
    &\quad \ 319326401025663603283456969455774999\cdots, \\
    a_{42}^* &= -127.7716013893247382242186456425934011764805313596639 \\
    &\quad \ 67400668179006193457701452804015388747211751146399638097486412 \\
    &\quad \ 842353258651791575831382292803926111\cdots, \\
    b_{42}^* &= 82.322414456879790582958103263244302174522319705978300 \\
    &\quad \ 53974202755979003867464221767910976003122817476468431202243766 \\
    &\quad \ 86997050326286369483244824498831635\cdots.
\end{split}
\end{equation}
    
At this parameter point, numerical evaluation gives the ninth, tenth, and eleventh Lyapunov constants as
\begin{align*}
    V_9 &= -1.55318714584841318374461767468566181151703488337346 \\
    &\quad \ 13751921091469497559142772726772139526882514824804 \\
    &\quad \ 5440667356720764690393793468899794720216431910870\cdots \times 10^{-146}, \\
    V_{10} &= 2.56373546354466057761014708713258910211795314556439 \\
    &\quad \ 71482971225366605105959117864768190884513632748350 \\
    &\quad \ 1269682515822525263720001620200014016920176319048\cdots \times 10^{-132}, \\
    V_{11} &= 4.05420776901882766646139421386345550569240668061227 \\
    &\quad \ 64623898978466709060591392999087182599590196314218 \\
    &\quad \ 7007532338273473386377519893113159637494700116872\cdots \times 10^{6}.
\end{align*}
Numerical evaluation with the specified parameters confirms that the lower-order Lyapunov constants vanish within computational precision limits, yielding $V_i \approx 0$ for $i=1, \dots, 10$ and $V_{11} \neq 0$. To apply Lemma~\ref{limit cycle}, we take
\[
\boldsymbol{\lambda}=(\delta,b_{21},a_{32},b_{42},a_{42},a_{41},
b_{41},b_{32},a_{21},a_{22}).
\]
The parameters $\delta$, $b_{21}$, $a_{32}$, $b_{42}$, $a_{42}$, and $a_{41}$ can be used successively to perturb the first six Lyapunov constants. We therefore consider the reduced Jacobian matrix
\begin{equation}\label{eq:reduced_jacobian}
    J_r=\left.\frac{\partial(V_7,V_8,V_9,V_{10})}{\partial(b_{41},b_{32},a_{21},a_{22})}
    \right|_{\boldsymbol{\lambda}=\boldsymbol{\lambda}^*} =-1.2288661497 \times 10^{15} \neq 0.
\end{equation}

Therefore, the hypotheses of Lemma~\ref{limit cycle} are satisfied. Hence, under suitable small perturbations of the ten parameters, system~\eqref{example:1} has 10 small-amplitude limit cycles bifurcating from the origin. This finding establishes a new lower bound for the maximum number of limit cycles in the generalized switching quartic Liénard systems.
\end{proof}

\subsection{Application}
The Alpazur oscillator introduced in \cite{kousaka1999bifurcation} provides a representative switched nonlinear circuit model. In the original system, the two switching states are distinguished by the linear conductance and source terms, whereas the cubic voltage--current characteristic of the nonlinear conductor remains unchanged. In practical switching circuits, different conducting branches may exhibit different nonlinear voltage--current characteristics. Motivated by this observation, we consider a modified Alpazur oscillator in which the $LC$ oscillatory core is retained, while the switch selects between two nonlinear conductor branches. Thus, the modification extends the original switching mechanism to
branch-dependent nonlinear conductor characteristics and allows us to investigate their influence on periodic oscillations.

\begin{figure}[t]
    \centering
    \includegraphics[width=0.75\linewidth]{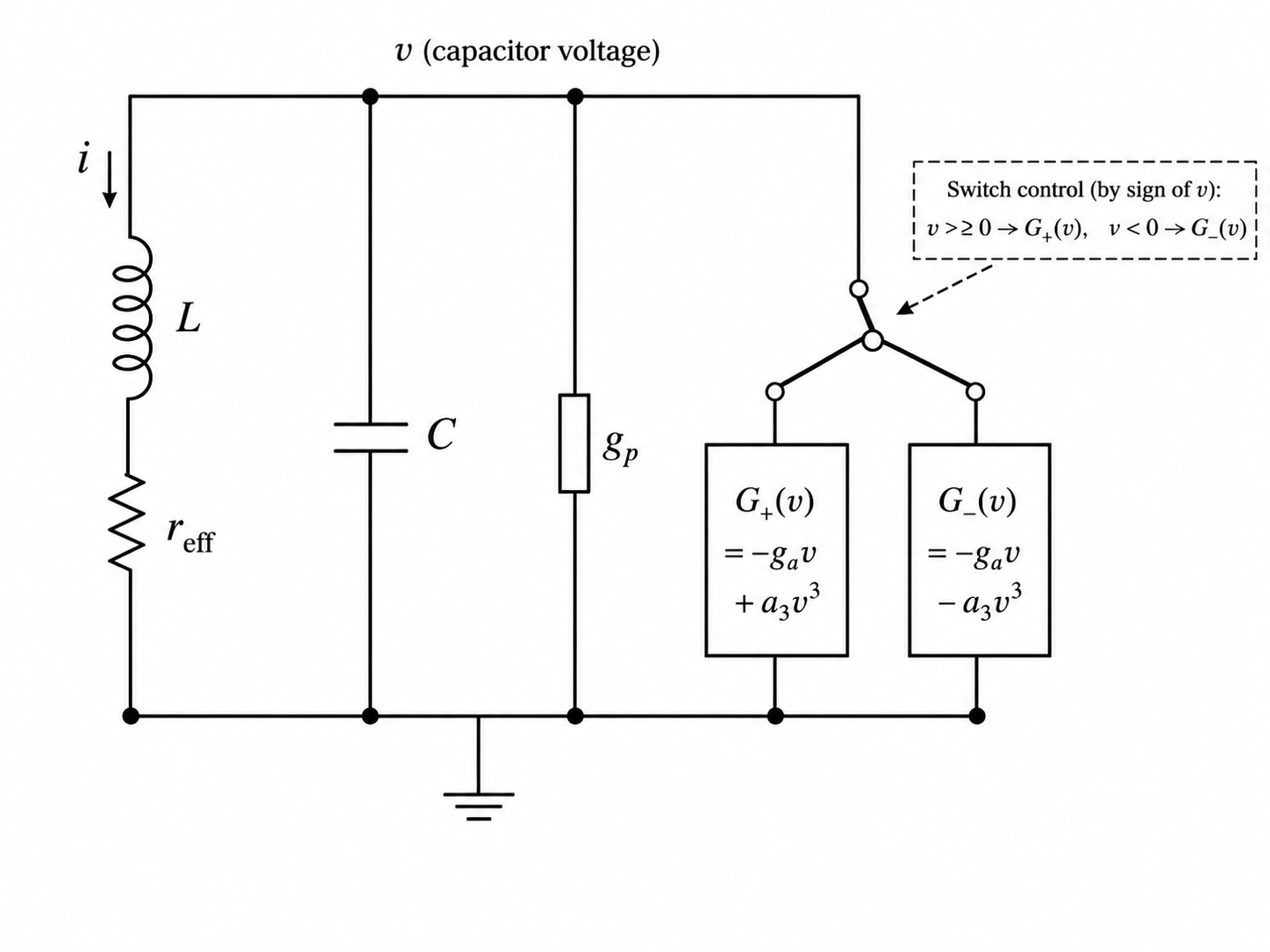}
    \caption{Modified Alpazur oscillator with switch-selected cubic
    nonlinear conductors.}
    \label{fig:modified_alpazur_circuit}
\end{figure}

The circuit model is shown in Fig.~\ref{fig:modified_alpazur_circuit}. Let $i$ and $v$ denote the inductor current and the capacitor voltage, respectively. The switch is controlled by the sign of $v$ and selects the corresponding nonlinear conductor branch. Let $g_p>0$ denote the conductance of the passive resistive branch and $g_a>0$ the magnitude of the negative linear conductance supplied by the active element. The two conductor characteristics are defined by
\begin{equation}\label{eq:modified_conductors}
    G_{+}(v)=-g_av+a_3v^3,\quad
    G_{-}(v)=-g_av-a_3v^3,
    \qquad a_3>0.
\end{equation}
Applying Kirchhoff's laws \cite{alexander2021fundamentals} gives the circuit equations
\begin{equation}\label{eq:modified_physical_alpazur}
\begin{aligned}
    L\frac{di}{dt}
    &=-v-r_{\mathrm{eff}}i,\\
    C\frac{dv}{dt}
    &=
    \begin{cases}
        i-G_{+}(v)-g_pv, & v\geq0,\\
        i-G_{-}(v)-g_pv, & v<0,
    \end{cases}
\end{aligned}
\end{equation}
where $r_{\mathrm{eff}}$ is the effective series resistance of the inductor branch. Substituting \eqref{eq:modified_conductors} into \eqref{eq:modified_physical_alpazur} yields
\begin{equation}\label{eq:modified_physical_reduced}
\begin{aligned}
    L\frac{di}{dt}
    &=-v-r_{\mathrm{eff}}i,\\
    C\frac{dv}{dt}
    &=
    \begin{cases}
        i+\mu v-a_3v^3, & v\geq0,\\
        i+\mu v+a_3v^3, & v<0,
    \end{cases}
\end{aligned}
\end{equation}
where $\mu=g_a-g_p$.

Introduce the dimensionless variables
\begin{equation*}
    X=\sqrt{L}\,i,\quad
    Y=\sqrt{C}\,v,\quad
    \tau=\frac{t}{\sqrt{LC}},
\end{equation*}
and the parameters
\begin{equation*}
    \alpha=r_{\mathrm{eff}}\sqrt{\frac{C}{L}},
    \quad
    \eta=\mu\sqrt{\frac{L}{C}},
    \quad
    \gamma=\frac{a_3\sqrt{L}}{C^{3/2}}>0.
\end{equation*}
The circuit equations \eqref{eq:modified_physical_reduced} are then transformed into
\begin{equation}\label{eq:modified_physical_reduced_2}
\begin{pmatrix}
    \dfrac{dX}{d\tau}\\[2mm]
    \dfrac{dY}{d\tau}
\end{pmatrix}
=
\begin{cases}
\begin{pmatrix}
    -\alpha X-Y\\
    X+\eta Y-\gamma Y^3
\end{pmatrix},
& Y\geq0,\\[3mm]
\begin{pmatrix}
    -\alpha X-Y\\
    X+\eta Y+\gamma Y^3
\end{pmatrix},
& Y<0.
\end{cases}
\end{equation}
The linear part has trace $\eta-\alpha$ and determinant $1-\alpha\eta$. Hence, the Hopf critical conditions are $\eta=\alpha$ and $\alpha^2<1$. Set
\begin{equation*}
    s=\sqrt{1-\alpha^2},
    \quad
    \begin{pmatrix}X\\Y\end{pmatrix}
    =
    \begin{pmatrix}
        s&-\alpha\\
        0&1
    \end{pmatrix}
    \begin{pmatrix}u\\v\end{pmatrix},
    \qquad
    \sigma=s\tau .
\end{equation*}
This transformation preserves the switching line. Finally, the scaling
$u=\sqrt{s/(3\gamma)}\,x$ and $v=\sqrt{s/(3\gamma)}\,y$ reduces the system \eqref{eq:modified_physical_reduced_2} to
\begin{equation}\label{eq:normalized_modified_alpazur}
\begin{pmatrix}
    \dfrac{dx}{d\sigma}\\[3mm]
    \dfrac{dy}{d\sigma}
\end{pmatrix}
=
\begin{cases}
\begin{pmatrix}
    -y-\dfrac{\alpha}{3\sqrt{1-\alpha^2}}y^3\\[2mm]
    x-\dfrac{1}{3}y^3
\end{pmatrix},
& y\geq0,\\[7mm]
\begin{pmatrix}
    -y+\dfrac{\alpha}{3\sqrt{1-\alpha^2}}y^3\\[2mm]
    x+\dfrac{1}{3}y^3
\end{pmatrix},
& y<0.
\end{cases}
\end{equation}

\begin{theorem}\label{thm:alpazur_weak_focus}
For each fixed $\alpha\neq0$, no small-amplitude limit cycle can bifurcate from the origin.
\end{theorem}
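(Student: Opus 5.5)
The plan is to show that, at the Hopf-critical value $\eta=\alpha$ and for fixed $\alpha\neq0$, the origin of \eqref{eq:normalized_modified_alpazur} is a \emph{weak focus}, not a center. Concretely, I will show that the successive function $\Delta(h)$ of Definition~\ref{def:Vk_poincare} has a first nonvanishing coefficient $V_{k_0}$. Then $\Delta(h)=V_{k_0}h^{k_0}+O(h^{k_0+1})$ has no positive zero in some interval $(0,h_0)$. Hence no periodic orbit, and in particular no limit cycle, exists in a neighborhood of the origin. Since $\gamma$ has been scaled out and $\eta$ is fixed at $\alpha$, the normalized system contains no free parameter besides $\alpha$. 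So for fixed $\alpha$ the statement reduces to this weak-focus property, with no perturbation argument needed.

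First I bring the system into the form \eqref{poly}. Write $c=\alpha/(3\sqrt{1-\alpha^2})$. The upper subsystem is already of the required type, with $P^{(1)}=-cy^3$ and $Q^{(1)}=-\tfrac13y^3$. For the lower subsystem I apply the folding $(x,y,t)\mapsto(x,-y,-t)$ used before Definition~\ref{def:Vk_poincare}. A direct substitution gives $S^{(2)}:\ \dot x=-y+cy^3,\ \dot y=x-\tfrac13 y^3$. Thus the two folded subsystems differ only in the sign of the cubic term $\mp cy^3$ in $\dot x$.

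Next I run Algorithm~\ref{alg:algebraic_nf}, using Theorem~\ref{thm:algebraic_normal_form} to get each $g_k^{(i)}=\mathcal M[\widetilde P_k^{(i)}]$ and Proposition~\ref{prop:normal_form_constant} to get $V_k=\pi(g_k^{(1)}-g_k^{(2)})$.

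\begin{itemize}
\item There are no quadratic terms, so $R_2^{(i)}=0$. Hence $g_2^{(i)}=0$, $h_2^{(i)}=0$ and $V_2=0$.
\item At order three, $\Gamma_3^{(i)}=A_3^{(i)}=\mp c\cos\theta\sin^3\theta-\tfrac13\sin^4\theta$. The half-period mean of $\cos\theta\sin^3\theta$ over $[0,\pi]$ vanishes, so $g_3^{(1)}=g_3^{(2)}=-\tfrac18$ and $V_3=0$.
\item At order four, every term in $\Gamma_4^{(i)}$ contains a factor $R_2$, $h_2$ or $g_2$, all of which vanish. So $V_4=0$ identically.
\item The decisive computation is order five. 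It involves $R_5=-A_3B_3$-type terms together with the cross terms $3R_3h_3-\partial_\rho\mathcal H\,G$ built from $h_3^{(i)}=\mathcal I[\Gamma_3^{(i)}+\tfrac18]$. The terms even in $c$ cancel in $g_5^{(1)}-g_5^{(2)}$, so I expect $V_5=\kappa\,c$ for an explicit rational multiple $\kappa$ of $\pi$.
\end{itemize}

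The main obstacle is verifying $\kappa\neq0$. This is exactly the step where a symmetry might force cancellation: for $\alpha=0$ the system is $x$-axis symmetric in the sense of Lemma~\ref{lem:symmetry_center}, so every odd-in-$c$ contribution must be checked carefully. I will carry out the order-five computation symbolically with the operators $\mathcal M$ and $\mathcal I$. That computation is finite, since only Laurent monomials $\theta^pz^j$ with $p\le1$ and $|j|\le8$ occur.

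If $V_5$ unexpectedly vanished, I would continue to $V_7$. In that case I would use the symmetry $(x,y)\mapsto(-x,-y)$ of the original field, together with the relation $c\mapsto-c$ between $S^{(1)}$ and $S^{(2)}$, to argue that each $V_k$ is an odd polynomial in $c$. Each $V_k$ then factors as $c$ times a polynomial in $c^2$, and I would aim for the first such factor that has no real root. Once some $V_{k_0}\neq0$ with all lower ones zero, the analyticity of $\Delta$ gives the isolation of $h=0$ as a zero, which completes the proof.
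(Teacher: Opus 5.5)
Your plan follows the paper's proof step for step. You fold the lower field, run Algorithm~\ref{alg:algebraic_nf}, obtain $V_2=V_3=V_4=0$ and $V_5\neq0$, and conclude from $\Delta(h)=h^5(V_5+O(h))$ that $\Delta$ has no zero in $(0,\delta)$. Your folded subsystems, the values $g_3^{(1)}=g_3^{(2)}=-\tfrac18$, and the vanishing of $\Gamma_4^{(i)}$ are all correct.

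\textbf{The gap.} The one thing missing is the step that carries the whole theorem. You only say you expect $V_5=\kappa c$ with $\kappa\neq0$. The paper just states the algorithm's output $V_5=\frac{5\pi\alpha}{96\sqrt{1-\alpha^2}}$.

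\textbf{Closing it by hand.} You can confirm this value without the Laurent machinery. Since $\Gamma_3^{(i)}=R_3^{(i)}$ and $\frac{d h_3^{(i)}}{d\theta}=R_3^{(i)}-g_3^{(i)}$, you have
\[
\Gamma_5^{(i)}=3h_3^{(i)}\bigl(R_3^{(i)}-g_3^{(i)}\bigr)+R_5^{(i)}
=\frac32\,\frac{d}{d\theta}\bigl(h_3^{(i)}\bigr)^2-A_3^{(i)}B_3^{(i)}.
\]
Because $h_3^{(i)}(0)=h_3^{(i)}(\pi)=0$, the first term has zero half-period mean. Hence
\[
g_5^{(i)}=-\frac1\pi\int_0^\pi A_3^{(i)}B_3^{(i)}\,d\theta .
\]
With $B_3^{(i)}=-\tfrac13\cos\theta\sin^3\theta\pm c\sin^4\theta$, the terms even in $c$ cancel in the difference, leaving
\[
A_3^{(1)}B_3^{(1)}-A_3^{(2)}B_3^{(2)}=\frac{2c}{3}\sin^6\theta\bigl(\cos^2\theta-\sin^2\theta\bigr).
\]
Its integral over $[0,\pi]$ is $\frac{2c}{3}\bigl(\frac{5\pi}{128}-\frac{35\pi}{128}\bigr)=-\frac{5\pi c}{32}$.

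Therefore $V_5=\pi\bigl(g_5^{(1)}-g_5^{(2)}\bigr)=\frac{5\pi c}{32}$, i.e.\ $\kappa=\frac{5\pi}{32}$. Substituting $c=\alpha/(3\sqrt{1-\alpha^2})$ gives exactly the paper's value. So the $V_7$ fallback is never needed.

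\textbf{On the fallback.} Had it been needed, note where the oddness of $V_k$ in $c$ comes from. It follows solely from $S^{(2)}$ being $S^{(1)}$ with $c\mapsto-c$. Each polynomial piece is odd, but the switched field itself is not invariant under $(x,y)\mapsto(-x,-y)$, so that symmetry cannot be invoked.
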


\begin{proof}
For $0<|\alpha|<1$, applying Algorithm~\ref{alg:algebraic_nf} gives
\[
    V_2=V_3=V_4=0,
    \quad
    V_5=
    \frac{5\pi\alpha}
    {96\sqrt{1-\alpha^2}}
    \neq0.
\]
Therefore, the origin is a weak focus.

The corresponding displacement function satisfies
\[
    \Delta(h)
    =
    V_5h^5+O(h^6)
    =
    h^5\bigl(V_5+O(h)\bigr).
\]
Since $V_5\neq0$, there exists $\delta>0$ such that $\Delta(h)\neq0$ for all $0<h<\delta$. Hence the Poincar\'e map has no nonzero fixed point sufficiently close to the origin, and consequently no small-amplitude limit cycle exists in this neighborhood.
\end{proof}

When $\alpha=0$, system~\eqref{eq:normalized_modified_alpazur} is symmetric with respect to the $x$-axis, and hence, by Lemma~\ref{lem:symmetry_center}, the origin is a center. In a practical circuit, the nonlinear conductors in the two switching branches generally have slightly different voltage--current characteristics because of component tolerances and device nonidealities. We therefore introduce small piecewise perturbations in their nonlinear coefficients to investigate the bifurcation of small-amplitude limit cycles from this center. The perturbed switching system is written as
\begin{equation}\label{eq:perturbed_modified_alpazur}
\begin{pmatrix}
    \dfrac{dx}{d\sigma}\\[3mm]
    \dfrac{dy}{d\sigma}
\end{pmatrix}
=
\begin{cases}
\begin{pmatrix}
    -y\\[2mm]
    x-\dfrac{1}{3}y^3
    +\varepsilon\left(b_{12}y^2+b_{13}y^3\right)
\end{pmatrix},
& y\geq0,\\[7mm]
\begin{pmatrix}
    -y\\[2mm]
    x+\dfrac{1}{3}y^3
    +\varepsilon\left(b_{22}y^2+b_{23}y^3\right)
\end{pmatrix},
& y<0,
\end{cases}
\end{equation}
where $0<|\varepsilon|\ll1$, $b_{12}$, $b_{13}$, $b_{22}$, $b_{23}\in \mathbb{R}$. The coefficients $b_{12}$ and $b_{22}$ describe quadratic nonlinear deviations in the two switching branches, whereas $b_{13}$ and $b_{23}$ represent small variations in their cubic voltage--current characteristics.

\begin{theorem}\label{thm:alpazur_center_cyclicity}
Consider the perturbed switching system \eqref{eq:perturbed_modified_alpazur}. At most one small-amplitude limit cycle can bifurcate from the origin, and the bound is tight. 
\end{theorem}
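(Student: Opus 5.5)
Our approach is to compute the first few Lyapunov constants of \eqref{eq:perturbed_modified_alpazur} with Algorithm~\ref{alg:algebraic_nf}, keeping the dependence on $\varepsilon$ exact. The system is already in the form \eqref{sys:2.5}. We first apply the folding $(x,y,t)\mapsto(x,-y,-t)$ to the lower subsystem. This gives $S^{(2)}$: $\dot x=-y$, $\dot y=x-\tfrac13 y^3-\varepsilon(-b_{22}y^2+b_{23}y^3)$ up to the sign conventions. We then write $x,y$ in terms of $r$ and $z$ and run the recursion for $k=2,\dots,5$.

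At $\varepsilon=0$ the unperturbed system has a center by Lemma~\ref{lem:symmetry_center}. Every $V_k$ is therefore $O(\varepsilon)$, and to leading order it is linear in the $b$'s. We expect the following structure.
\begin{itemize}
\item $V_2$ is proportional to $\varepsilon(b_{12}-b_{22})$, since a quadratic term $y^2$ contributes through $\mathcal M$ of an odd Laurent polynomial.
\item $V_3$ is proportional to $\varepsilon(b_{13}+b_{23})$, possibly together with terms in $b_{12},b_{22}$.
\item After imposing $V_2=V_3=0$, the perturbed system is again $x$-axis symmetric up to terms that vanish.
\end{itemize}
Concretely, $b_{22}=b_{12}$ and $b_{23}=-b_{13}$ should reproduce the symmetry condition of Lemma~\ref{lem:symmetry_center} for the full perturbed system. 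This identifies the center variety as $\{V_2=V_3=0\}$.

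\textbf{Upper bound.} We show that $V_2=V_3=0$ forces a center. Then $\Delta(h)$ factors as
\[
\Delta(h)=V_2h^2\bigl(1+O(h)\bigr)+V_3h^3\bigl(1+O(h)\bigr),
\]
or more carefully, $\Delta(h)$ lies in the ideal generated by $V_2$ and $V_3$ in the ring of convergent series. The standard Bautin-type argument then bounds the number of small positive zeros of $\Delta(h)/h^2$ by $1$.

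The main obstacle is exactly this ideal membership. We must ensure that all higher $V_k$ lie in the ideal $\langle V_2,V_3\rangle$, not merely vanish on the variety. Because the variety is a linear subspace of the $b$-parameters and the $V_k$ are polynomial (in fact affine at leading order in $\varepsilon$), the ideal is radical. So vanishing on the variety suffices, and this follows from the symmetry center condition.

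If the leading coefficient of $V_3$ modulo $V_2$ turns out to be zero, we will instead compute up to the first independent constant. Possibly $V_2$ vanishes identically and the relevant pair is $(V_3,V_4)$ or $(V_3,V_5)$, with $V_5$ carrying the $\tfrac{5\pi}{96}$-type term seen in Theorem~\ref{thm:alpazur_weak_focus}. The counting argument is the same: two independent generators give at most one limit cycle.

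\textbf{Tightness.} We apply Lemma~\ref{limit cycle} with $k=1$. We choose parameters with the lower constant zero and the next constant nonzero, for instance $b_{12}=b_{22}$ and $b_{13}+b_{23}\neq0$ if $V_2$ and $V_3$ are as predicted. We then perturb $b_{12}-b_{22}$ slightly with sign opposite to $V_3$. The nonzero partial derivative of $V_2$ with respect to $b_{12}$ gives the Jacobian condition, and exactly one limit cycle bifurcates.
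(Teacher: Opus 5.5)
Your proposal is correct and matches the paper's proof: compute $V_2=\tfrac43\varepsilon(b_{12}-b_{22})$ and $V_3=\tfrac{3\pi}{8}\varepsilon(b_{13}+b_{23})$, recognize $\{V_2=V_3=0\}$ as exactly the $x$-axis symmetry condition of Lemma~\ref{lem:symmetry_center}, and obtain tightness from Lemma~\ref{limit cycle} by varying $b_{12}-b_{22}$ with $b_{13}+b_{23}\neq0$. Your Bautin-ideal step is a more careful version of the paper's one-line passage from ``no higher-order weak focus'' to ``at most one limit cycle''; it works here because $V_2,V_3$ are $\varepsilon$ times independent linear forms in the $b$'s, and that, not the linearity of the variety alone, is what makes the ideal radical.
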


\begin{proof}
Applying Algorithm \ref{alg:algebraic_nf} to \eqref{eq:perturbed_modified_alpazur} and retaining the coefficient of $\varepsilon$ gives the first two Lyapunov constants
\begin{equation}\label{eq:alpazur_first_two_LC}
    V_2=\frac{4}{3}(b_{12}-b_{22}), \quad V_3=\frac{3\pi}{8}(b_{13}+b_{23}).
\end{equation}
Hence,
\[
    V_2=V_3=0
\]
if and only if
\[
    b_{12}=b_{22},
    \quad
    b_{13}=-b_{23}.
\]
Under these conditions, system~\eqref{eq:perturbed_modified_alpazur} is symmetric with respect to the $x$-axis. By Lemma~\ref{lem:symmetry_center}, the origin is a center, and all subsequent Lyapunov constants vanish. Therefore, no higher-order weak focus can occur in this perturbation family. It follows that at most one small-amplitude limit cycle can bifurcate from the origin.

It remains to show that this upper bound is attainable. Let
\[
    \delta=b_{12}-b_{22},
\]
and choose $b_{13}+b_{23}\neq0$. Then
\[
    V_2=\frac{4}{3}\delta,
    \quad
    V_3=\frac{3\pi}{8}(b_{13}+b_{23})\neq0.
\]
At $\delta=0$,
\[
    V_2(0)=0,
    \quad
    V_3(0)\neq0,
    \quad
    \frac{\partial V_2}{\partial\delta}(0)=\frac{4}{3}\neq0.
\]
Therefore, Lemma~\ref{limit cycle} implies that an arbitrarily small perturbation of $\delta$ produces exactly one small-amplitude limit cycle. Hence the cyclicity of the origin is exactly one.
\end{proof}

For a numerical illustration, we take
\[
    \varepsilon=0.1,\quad
    b_{12}=50,\quad
    b_{22}=0,\quad
    b_{13}=b_{23}=-250.
\]
Numerical integration of system~\eqref{eq:perturbed_modified_alpazur} shows the existence of a stable limit cycle, as illustrated in Fig.~\ref{fig:alpazur_limit_cycle}. 

\begin{figure}[htbp]
    \centering
    \includegraphics[width=0.45\textwidth]{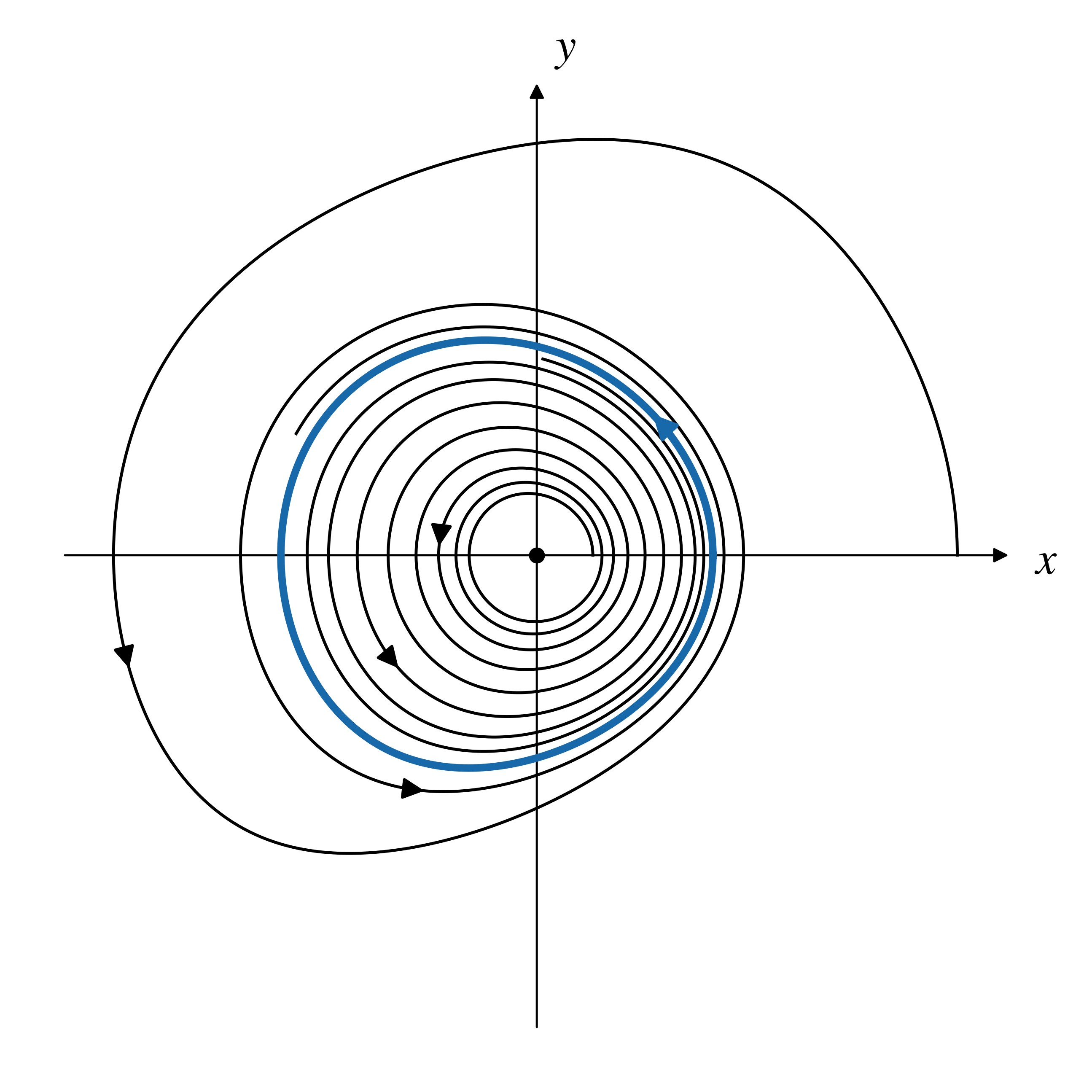}
    \caption{Phase portrait of
    system~\eqref{eq:perturbed_modified_alpazur} for
    $\varepsilon=0.1$, $b_{12}=50$, $b_{22}=0$, and
    $b_{13}=b_{23}=-250$.}
    \label{fig:alpazur_limit_cycle}
\end{figure}

From the circuit viewpoint, the stable limit cycle corresponds to a self-sustained periodic oscillation of the capacitor voltage and the inductor current. Unlike the ideal center, which possesses a continuous family of periodic orbits, the perturbed system exhibits an isolated periodic oscillation with a well-defined amplitude. This illustrates how differences in the nonlinear characteristics of the two switching branches can lead to a stable oscillatory state in the modified Alpazur circuit.

\section{Conclusion}\label{Conclusion}
We have developed a symbolic normal-form procedure for computing Lyapunov constants in planar switching systems. After the substitution $z=e^{\mathrm{i}\theta}$, the angular dependence of the radial equations is represented in an extended Laurent algebra. The half-period mean operator determines the normal-form coefficients, while the algebraic integration operator gives the normalized solutions of the homological equations. This formulation replaces repeated symbolic integration by finite coefficient operations. The algorithm was applied to a
generalized quartic switching Li{\'e}nard system and to a circuit-level modification of the Alpazur oscillator. In the first example, the recursive computation yields center conditions and a parameter configuration associated with ten small-amplitude limit cycles. In the second example, the algorithm determines the center conditions for small nonlinear perturbations of the two switching branches and shows that the first-order cyclicity of the center is one. Consequently, at most one small-amplitude limit cycle can bifurcate from the center within this perturbation family, and this upper bound is attainable. This example further illustrates the applicability of the proposed symbolic procedure to the analysis of nontrivial oscillatory behavior in physically motivated switching systems.

\appendix
\section{The algebraic expression in Example 1} \label{appendix_1}
The parameter expressions omitted from Section \ref{example:1} are provided below.
\begin{align*}
M_{7} &= \pi \Big( 875232 a_{21}^6 + 14542448 a_{21}^5 a_{22} + 43181792 a_{21}^4 a_{22}^2 + 44563008 a_{21}^3 a_{22}^3 \\
      &\quad + 13374272 a_{21}^2 a_{22}^4 - 1674160 a_{21} a_{22}^5 + 12866592 a_{21}^4 b_{32} + 6118416 a_{21}^4 b_{41} \\
      &\quad + 10332504 a_{21}^3 a_{22} b_{32} - 5214672 a_{21}^3 a_{22} b_{41} - 20835288 a_{21}^2 a_{22}^2 b_{32} - 28784592 a_{21}^2 a_{22}^2 b_{41} \\
      &\quad - 18301200 a_{21} a_{22}^3 b_{32} - 17451504 a_{21} a_{22}^3 b_{41} - 71244000 a_{21}^4 - 119354400 a_{21}^3 a_{22} \\
      &\quad - 24976800 a_{21}^2 a_{22}^2 + 7252470 a_{21}^2 b_{32}^2 - 10044324 a_{21}^2 b_{32} b_{41} + 3538080 a_{21}^2 b_{41}^2 \\
      &\quad + 23133600 a_{21} a_{22}^3 + 8434935 a_{21} a_{22} b_{32}^2 + 4244076 a_{21} a_{22} b_{32} b_{41} + 7076160 a_{21} a_{22} b_{41}^2 \\
      &\quad + 14288400 a_{22}^2 b_{32} b_{41} + 3538080 a_{22}^2 b_{41}^2 + 23092200 a_{21}^2 b_{32} - 3931200 a_{21}^2 b_{41} \\
      &\quad + 7216200 a_{21} a_{22} b_{32} - 7862400 a_{21} a_{22} b_{41} - 15876000 a_{22}^2 b_{32} - 3931200 a_{22}^2 b_{41} \\
      &\quad - 3231900 b_{32}^2 b_{41} + 3591000 b_{32}^2 \Big).
\end{align*}

\begin{align*}
M_8 &= 1302673428000 \pi a_{21}^6 + 21644616042000 \pi a_{21}^5 a_{22} + 64270699668000 \pi a_{21}^4 a_{22}^2 \\
    &\quad + 66326467032000 \pi a_{21}^3 a_{22}^3 + 19905932088000 \pi a_{21}^2 a_{22}^4 - 2491777890000 \pi a_{21} a_{22}^5 \\
    &\quad + 8739923216384 a_{21}^7 + 21890113658880 a_{21}^6 a_{22} - 15524481490944 a_{21}^5 a_{22}^2 \\
    &\quad - 107342461325312 a_{21}^4 a_{22}^3 - 119082307233792 a_{21}^3 a_{22}^4 - 35246185451520 a_{21}^2 a_{22}^5 \\
    &\quad + 5168332390400 a_{21} a_{22}^6 + 19150313868000 \pi a_{21}^4 b_{32} + 9106497414000 \pi a_{21}^4 b_{41} \\
    &\quad + 15378640641000 \pi a_{21}^3 a_{22} b_{32} - 7761387438000 \pi a_{21}^3 a_{22} b_{41} - 31010721777000 \pi a_{21}^2 a_{22}^2 b_{32} \\
    &\quad - 42842267118000 \pi a_{21}^2 a_{22}^2 b_{41} - 27239048550000 \pi a_{21} a_{22}^3 b_{32}  \\
    &\quad + 26755230592512 a_{21}^5 b_{32} + 13032430571520 a_{21}^5 b_{41} + 33593961959424 a_{21}^4 a_{22} b_{32} \\
    &\quad + 14013997166592 a_{21}^4 a_{22} b_{41} + 14845738102272 a_{21}^3 a_{22}^2 b_{32} + 30558375788544 a_{21}^3 a_{22}^2 b_{41} \\
    &\quad + 46478020930560 a_{21}^2 a_{22}^3 b_{32} + 71204482363392 a_{21}^2 a_{22}^3 b_{41} + 38471014195200 a_{21} a_{22}^4 b_{32} \\
    &\quad + 41627673169920 a_{21} a_{22}^4 b_{41} - 106037788500000 \pi a_{21}^4 - 177644105100000 \pi a_{21}^3 a_{22} \\
    &\quad - 37174844700000 \pi a_{21}^2 a_{22}^2 + 10794395036250 \pi a_{21}^2 b_{32}^2 - 14949720733500 \pi a_{21}^2 b_{32} b_{41} \\
    &\quad + 5265989820000 \pi a_{21}^2 b_{41}^2 + 34431471900000 \pi a_{21} a_{22}^3 + 12554346380625 \pi a_{21} a_{22} b_{32}^2 \\
    &\quad + 6316776616500 \pi a_{21} a_{22} b_{32} b_{41} + 10531979640000 \pi a_{21} a_{22} b_{41}^2 \\
    &\quad + 5265989820000 \pi a_{22}^2 b_{41}^2 - 84635927347200 a_{21}^5 - 91674005913600 a_{21}^4 a_{22} \\
    &\quad + 24313203302400 a_{21}^3 a_{22}^2 + 13869935493120 a_{21}^3 b_{32}^2 - 7744845279744 a_{21}^3 b_{32} b_{41} \\
    &\quad + 8719337539584 a_{21}^3 b_{41}^2 - 14895285043200 a_{21}^2 a_{22}^3 + 18711830165760 a_{21}^2 a_{22} b_{32}^2 \\
    &\quad + 20979488871936 a_{21}^2 a_{22} b_{32} b_{41} + 8403833991168 a_{21}^2 a_{22} b_{41}^2 - 46246566912000 a_{21} a_{22}^4 \\
    &\quad + 3010668134400 a_{21} a_{22}^2 b_{32}^2 - 2271714232320 a_{21} a_{22}^2 b_{32} b_{41} - 9350344636416 a_{21} a_{22}^2 b_{41}^2 \\
    &\quad - 30996048384000 a_{22}^3 b_{32} b_{41} - 9034841088000 a_{22}^3 b_{41}^2 + 34369853175000 \pi a_{21}^2 b_{32} \\
    &\quad - 5851099800000 \pi a_{21}^2 b_{41} + 10740411675000 \pi a_{21} a_{22} b_{32} - 11702199600000 \pi a_{21} a_{22} b_{41} \\
    &\quad - 23629441500000 \pi a_{22}^2 b_{32} - 5851099800000 \pi a_{22}^2 b_{41} - 4810279162500 \pi b_{32}^2 b_{41} \\
    &\quad + 33859687680000 a_{21}^3 b_{32} + 1298289254400 a_{21}^3 b_{41} + 12185641728000 a_{21}^2 a_{22} b_{32} \\
    &\quad + 12635290828800 a_{21}^2 a_{22} b_{41} + 12766007808000 a_{21} a_{22}^2 b_{32} + 21375713894400 a_{21} a_{22}^2 b_{41} \\
    &\quad - 5882967014400 a_{21} b_{32}^2 b_{41} - 5711702169600 a_{21} b_{32} b_{41}^2 + 34440053760000 a_{22}^3 b_{32} \\
    &\quad + 10038712320000 a_{22}^3 b_{41} - 877879296000 a_{22} b_{32}^2 b_{41} - 5711702169600 a_{22} b_{32} b_{41}^2 \\
    &\quad + 5344754625000 \pi b_{32}^2 + 8941363200000 a_{21}^3 + 17882726400000 a_{21}^2 a_{22}  \\
    &\quad + 8941363200000 a_{21} a_{22}^2 + 6536630016000 a_{21} b_{32}^2 + 3663926784000 a_{21} b_{32} b_{41} \\
    &\quad  + 975421440000 a_{22} b_{32}^2 + 3663926784000 a_{22} b_{32} b_{41} - 6706022400000 a_{21} b_{32} \\
    &\quad - 6706022400000 a_{22} b_{32}  + 21266497350000 \pi a_{22}^2 b_{32} b_{41} - 25974382266000 \pi a_{21} a_{22}^3 b_{41}.
\end{align*}

\begin{align*}
b_{41} &= \frac{N}{108 \cdot D}, \quad \text{where} \\
K &= \frac{20475}{22208} a_{21} \pi + \frac{20475}{22208} a_{22} \pi + \frac{834311}{546525} a_{21}^2 - \frac{29}{525} a_{22} a_{21} - \frac{4940}{3123} a_{22}^2, \\
N &= 6685837461184 a_{21}^8 + 9081072000000 a_{21}^4 + 92360594940000 a_{21}^6 \\
  &\quad - 3595928137416 a_{21}^6 K + 17029949319090 a_{21}^4 K^2 - 6196226400000 a_{22}^4 K \\
  &\quad - 147409646810080 a_{21}^4 a_{22}^4 - 35764136132016 a_{21}^3 a_{22}^5 + 3986731268080 a_{21}^2 a_{22}^6 \\
  &\quad + 907153624000 a_{21} a_{22}^7 - 98029056945360 a_{21}^6 a_{22}^2 - 194698277856160 a_{21}^5 a_{22}^3 \\
  &\quad - 5210874662000 a_{21}^7 a_{22} - 124779383440800 a_{21}^2 a_{22}^4 - 201478942142400 a_{21}^4 a_{22}^2 \\
  &\quad - 360144487862400 a_{21}^3 a_{22}^3 + 13027822080000 a_{21} a_{22}^5 + 113218935139200 a_{21}^5 a_{22} \\
  &\quad + 9081072000000 a_{21} a_{22}^3 + 27243216000000 a_{21}^3 a_{22} + 27243216000000 a_{21}^2 a_{22}^2 \\
  &\quad + 11890968500250 a_{21}^2 K^3 - 6810804000000 a_{22}^2 K + 13829708552625 a_{21} a_{22} K^3 \\
  &\quad - 164895976537200 a_{21}^3 a_{22} K + 29576528279235 a_{21}^3 a_{22} K^2 + 7474102716645 a_{21}^2 a_{22}^2 K^2 \\
  &\quad + 60439306268592 a_{21}^5 a_{22} K - 5072476243500 a_{21} a_{22}^3 K^2 + 103150301508000 a_{21} a_{22}^3 K \\
  &\quad - 11136949290000 a_{21} a_{22}^5 K + 206063373624624 a_{21}^3 a_{22}^3 K - 13621608000000 a_{21} a_{22} K \\
  &\quad + 19786154472000 a_{21} a_{22} K^2 + 52510681605600 a_{21}^2 a_{22}^4 K + 84669916793400 a_{21}^2 a_{22}^2 K \\
  &\quad + 206450977135032 a_{21}^4 a_{22}^2 K + 35512126272000 a_{21}^2 K^2 + 5887713825000 K^3 \\
  &\quad - 140219365422600 a_{21}^4 K - 6810804000000 a_{21}^2 K - 15725971800000 a_{22}^2 K^2, \\
D &= 19240027916 a_{21}^6 - 380325272096 a_{21}^5 a_{22} - 1228801446184 a_{21}^4 a_{22}^2 \\
  &\quad - 1212052426816 a_{21}^3 a_{22}^3 - 355201743044 a_{21}^2 a_{22}^4 + 27614537600 a_{21} a_{22}^5 \\
  &\quad - 252833457669 a_{21}^4 K + 61488269562 a_{21}^3 a_{22} K + 829841692131 a_{21}^2 a_{22}^2 K \\
  &\quad + 463884744900 a_{21} a_{22}^3 K - 51635220000 a_{22}^4 K - 103315789200 a_{21}^4 \\
  &\quad - 309947367600 a_{21}^3 a_{22} - 309947367600 a_{21}^2 a_{22}^2 + 132908164200 a_{21}^2 K^2 \\
  &\quad - 103315789200 a_{21} a_{22}^3 + 1858399200 a_{21} a_{22} K^2 - 131049765000 a_{22}^2 K^2 \\
  &\quad + 25225200000 a_{21}^2 K + 50450400000 a_{21} a_{22} K + 25225200000 a_{22}^2 K \\
  &\quad + 49064281875 K^3.
\end{align*}




\bibliographystyle{elsarticle-num} 

\bibliography{references}

@String{Academic = "Academic Press" }

@String{Springer = "Springer-Verlag" }

@book{guckenheimer2013,
  author    = {Guckenheimer, J. and Holmes, P.},
  title     = {Nonlinear Oscillations, Dynamical Systems, and Bifurcations of Vector Fields},
  series    = {Applied Mathematical Sciences},
  volume    = {42},
  publisher = {Springer},
  year      = {1983}
}

@book{marsden2012hopf,
  author    = {Marsden, J. E. and McCracken, M.},
  title     = {The {Hopf} Bifurcation and Its Applications},
  series    = {Applied Mathematical Sciences},
  volume    = {19},
  publisher = {Springer},
  year      = {1976}
}

@book{han2012normal,
  author    = {Han, M. and Yu, P.},
  title     = {Normal Forms, {Melnikov} Functions and Bifurcations of Limit Cycles},
  series    = {Applied Mathematical Sciences},
  volume    = {181},
  publisher = {Springer},
  year      = {2012}
}

@book{perko2013differential,
  author    = {Perko, L.},
  title     = {Differential Equations and Dynamical Systems},
  edition   = {3},
  series    = {Texts in Applied Mathematics},
  volume    = {7},
  publisher = {Springer},
  year      = {2001}
}

@article{li2015center,
  author  = {Li, F. and Yu, P. and Tian, Y. and Liu, Y.},
  title   = {Center and Isochronous Center Conditions for Switching Systems Associated with Elementary Singular Points},
  journal = {Communications in Nonlinear Science and Numerical Simulation},
  volume  = {28},
  number  = {1--3},
  pages   = {81--97},
  year    = {2015}
}

@book{bernardo2008piecewise,
  author    = {di Bernardo, M. and Budd, C. J. and Champneys, A. R. and Kowalczyk, P.},
  title     = {Piecewise-Smooth Dynamical Systems: Theory and Applications},
  series    = {Applied Mathematical Sciences},
  volume    = {163},
  publisher = {Springer},
  year      = {2008}
}

@article{yu2021eighteen,
  author  = {Yu, P. and Han, M. and Zhang, X.},
  title   = {Eighteen Limit Cycles Around Two Symmetric Foci in a Cubic Planar Switching Polynomial System},
  journal = {Journal of Differential Equations},
  volume  = {275},
  pages   = {939--959},
  year    = {2021}
}

@article{buzzi2013piecewise,
  author  = {Buzzi, C. and Pessoa, C. and Torregrosa, J.},
  title   = {Piecewise Linear Perturbations of a Linear Center},
  journal = {Discrete and Continuous Dynamical Systems},
  volume  = {33},
  number  = {9},
  pages   = {3915--3936},
  year    = {2013}
}

@article{chen2024nilpotent,
  author  = {Chen, T. and Li, F. and Yu, P.},
  title   = {Nilpotent Center Conditions in Cubic Switching Polynomial {Li{\'e}nard} Systems by Higher-Order Analysis},
  journal = {Journal of Differential Equations},
  volume  = {379},
  pages   = {258--289},
  year    = {2024}
}

@article{liang2016number,
  author  = {Liang, F. and Han, M.},
  title   = {On the Number of Limit Cycles in Small Perturbations of a Piecewise Linear Hamiltonian System with a Heteroclinic Loop},
  journal = {Chinese Annals of Mathematics, Series B},
  volume  = {37},
  number  = {2},
  pages   = {267--280},
  year    = {2016}
}

@book{filippov1988differential,
  author    = {Filippov, A. F.},
  title     = {Differential Equations with Discontinuous Righthand Sides: Control Systems},
  series    = {Mathematics and Its Applications},
  volume    = {18},
  publisher = {Kluwer Academic Publishers},
  year      = {1988}
}

@article{freire1998bifurcation,
  author  = {Freire, E. and Ponce, E. and Rodrigo, F. and Torres, F.},
  title   = {Bifurcation Sets of Continuous Piecewise Linear Systems with Two Zones},
  journal = {International Journal of Bifurcation and Chaos},
  volume  = {8},
  number  = {11},
  pages   = {2073--2097},
  year    = {1998}
}

@article{llibre2004existence,
  author  = {Llibre, J. and Teruel, A. E.},
  title   = {Existence of {Poincar{\'e}} Maps in Piecewise Linear Differential Systems in {$\mathbb{R}^n$}},
  journal = {International Journal of Bifurcation and Chaos},
  volume  = {14},
  number  = {8},
  pages   = {2843--2851},
  year    = {2004}
}

@article{guo2019bifurcation,
  author  = {Guo, L. and Yu, P. and Chen, Y.},
  title   = {Bifurcation Analysis on a Class of Three-Dimensional Quadratic Systems with Twelve Limit Cycles},
  journal = {Applied Mathematics and Computation},
  volume  = {363},
  pages   = {124577},
  year    = {2019}
}

@article{chen2026global,
  author  = {Chen, T. and Peng, J.},
  title   = {Global Center and Limit Cycles in Generalized Piecewise Cubic {Li{\'e}nard} Systems},
  journal = {Nonlinear Analysis: Real World Applications},
  volume  = {87},
  pages   = {104452},
  year    = {2026}
}

@article{kousaka1999bifurcation,
  author  = {Kousaka, T. and Ueta, T. and Kawakami, H.},
  title   = {Bifurcation of Switched Nonlinear Dynamical Systems},
  journal = {IEEE Transactions on Circuits and Systems II: Analog and Digital Signal Processing},
  volume  = {46},
  number  = {7},
  pages   = {878--885},
  year    = {1999}
}

@article{gasull2003center,
  author  = {Gasull, A. and Torregrosa, J.},
  title   = {Center-Focus Problem for Discontinuous Planar Differential Equations},
  journal = {International Journal of Bifurcation and Chaos},
  volume  = {13},
  number  = {7},
  pages   = {1755--1765},
  year    = {2003}
}

@article{smale1998mathematical,
  author  = {Smale, S.},
  title   = {Mathematical Problems for the Next Century},
  journal = {The Mathematical Intelligencer},
  volume  = {20},
  number  = {2},
  pages   = {7--15},
  year    = {1998}
}

@article{tian2011hopf,
  author  = {Tian, Y. and Han, M.},
  title   = {{Hopf} Bifurcation for Two Types of {Li{\'e}nard} Systems},
  journal = {Journal of Differential Equations},
  volume  = {251},
  number  = {4--5},
  pages   = {834--859},
  year    = {2011}
}

@article{llibre2015limit,
  author  = {Llibre, J. and Teixeira, M. A.},
  title   = {Limit Cycles for {$m$}-Piecewise Discontinuous Polynomial {Li{\'e}nard} Differential Equations},
  journal = {Zeitschrift f{\"u}r Angewandte Mathematik und Physik},
  volume  = {66},
  number  = {1},
  pages   = {51--66},
  year    = {2015}
}

@incollection{lins1977lienard,
  author    = {Lins, A. and de Melo, W. and Pugh, C. C.},
  title     = {On {Li{\'e}nard}'s Equation},
  booktitle = {Geometry and Topology},
  series    = {Lecture Notes in Mathematics},
  volume    = {597},
  pages     = {335--357},
  publisher = {Springer},
  year      = {1977}
}

@article{tian2015center,
  author  = {Tian, Y. and Yu, P.},
  title   = {Center Conditions in a Switching {Bautin} System},
  journal = {Journal of Differential Equations},
  volume  = {259},
  number  = {3},
  pages   = {1203--1226},
  year    = {2015}
}

@article{colombo2009two,
  author  = {Colombo, A. and Lamiani, P. and Benadero, L. and di Bernardo, M.},
  title   = {Two-Parameter Bifurcation Analysis of the Buck Converter},
  journal = {SIAM Journal on Applied Dynamical Systems},
  volume  = {8},
  number  = {4},
  pages   = {1507--1522},
  year    = {2009}
}

@book{romanovski2009center,
  author    = {Romanovski, V. G. and Shafer, D. S.},
  title     = {The Center and Cyclicity Problems: A Computational Algebra Approach},
  publisher = {Birkh{\"a}user},
  year      = {2009}
}

@article{christopher1999small,
  author  = {Christopher, C. and Lynch, S.},
  title   = {Small-Amplitude Limit Cycle Bifurcations for {Li{\'e}nard} Systems with Quadratic or Cubic Damping or Restoring Forces},
  journal = {Nonlinearity},
  volume  = {12},
  number  = {4},
  pages   = {1099--1112},
  year    = {1999}
}

@book{alexander2021fundamentals,
  author    = {Alexander, C. K. and Sadiku, M. N. O.},
  title     = {Fundamentals of Electric Circuits},
  edition   = {7},
  publisher = {McGraw-Hill Education},
  year      = {2021}
}
\end{document}